\DeclareSymbolFontAlphabet{\mathbb}{AMSb} 
\DeclareSymbolFontAlphabet{\mathbbl}{bbold}
\newcommand{\Prism}{{\mathlarger{\mathbbl{\Delta}}}}
\numberwithin{equation}{section}
\newcommand{\nc}{\newcommand}
\nc{\rnc}{\renewcommand}
\rnc{\P}{\mathbf P}
\nc{\R}{\mathbf R}
\rnc{\rm}{\mathrm}
\rnc{\bf}{\mathbf}
\nc{\cal}{\mathcal}
\nc{\C}{\mathbf C}
\nc{\Q}{\mathbf Q}
\nc{\Z}{\mathbf Z}
\nc{\A}{\mathbf A}
\nc{\an}{\operatorname{an}}
\nc{\perfd}{\operatorname{perfd}}
\nc{\perf}{\operatorname{new, perf}}
\nc{\diam}{\diamondsuit}
\DeclareMathOperator{\charac}{char}
\nc{\htt}{\operatorname{ht}}
\nc{\Nm}{\operatorname{Nm}}
\nc{\Ker}{\operatorname{Ker}}
\nc{\mmod}{\operatorname{mod}}
\nc{\End}{\operatorname{End}}
\nc{\Tor}{\operatorname{Tor}}
\nc{\coker}{\operatorname{Coker}}
\nc{\dR}{\mathrm{dR}}
\nc{\crys}{\mathrm{crys}}
\nc{\dcrys}{\mathrm{crys}}
\nc{\cris}{\mathrm{cris}}
\nc{\Fil}{\mathrm{Fil}}
\nc{\gr}{\mathrm{gr}}
\nc{\conj}{\mathrm{conj}}
\nc{\Aut}{\operatorname{Aut}}
\nc{\cont}{\text{cont}}
\nc{\sep}{\text{sep}}
\nc{\Hom}{\mathrm{Hom}}
\nc{\Gal}{\mathrm{Gal}}
\nc{\Spec}{\text{Spec}\,}
\nc{\RZ}{\operatorname{RZ}}
\nc{\Syn}{\mathrm{Syn}}
\nc{\ProSyn}{\mathrm{ProSyn}}
\nc{\psyn}{\mathrm{psyn}}
\nc{\Psyn}{\mathrm{pSyn}}
\nc{\aff}{\mathrm{aff}}
\nc{\fppf}{\mathrm{fppf}}
\nc{\hocolim}{\operatorname{hocolim}}
\rnc{\t}{\tau}
\nc{\mm}{\pmb{\mu}}
\rnc{\a}{\alpha}
\nc{\n}{\mathfrak n}
\nc{\m}{\mathfrak m}
\nc{\mfs}{\mathfrak s}
\nc{\mf}{\mathfrak f}
\nc{\e}{\varepsilon}
\nc{\dd}{\delta}
\nc{\Imm}{\operatorname{Im}}
\rnc{\sp}{\operatorname{sp}}
\nc{\p}{\mathfrak p}
\nc{\q}{\mathfrak q}
\nc{\Sym}{\operatorname{Sym}}
\nc{\codim}{\operatorname{codim}}
\nc{\rk}{\operatorname{rk}}
\nc{\GL}{\operatorname{GL}}
\nc{\SL}{\operatorname{SL}}
\nc{\Lie}{\operatorname{Lie}}
\nc{\Ind}{\operatorname{Ind}}
\nc{\Div}{\underline{Div}}
\nc{\Pic}{\mathbf{Pic}}
\nc{\red}{\mathrm{red}}
\nc{\uPic}{\underline{ \mathbf{Pic}}}
\nc{\rH}{\mathrm{H}}
\nc{\Spf}{\operatorname{Spf}}
\nc{\Frac}{\operatorname{Frac}}
\nc{\colim}{\operatorname{colim}}
\nc{\Spa}{\operatorname{Spa}}
\rnc{\Spec}{\operatorname{Spec}}
\nc{\Alg}{\operatorname{Alg}}
\nc{\Poly}{\operatorname{Poly}}
\nc{\PShv}{\operatorname{PShv}}
\nc{\Shv}{\operatorname{Shv}}
\nc{\Fun}{\operatorname{Fun}}
\nc{\op}{\mathrm{op}}
\rnc{\an}{\operatorname{an}}
\nc{\et}{\text{\'et}}
\rnc{\et}{\text{\'et}}
\nc{\proet}{\text{pro\'et}}
\nc{\syn}{\text{syn}}
\nc{\prosyn}{\text{prosyn}}
\nc{\xr}{\xrightarrow}
\nc{\eps}{\epsilon}
\nc{\ov}{\overline}
\nc{\ud}{\underline}
\nc{\wdh}{\widehat}
\nc{\bG}{\mathbf G}
\nc{\bZ}{\mathbf Z}
\nc{\F}{\mathcal F}
\nc{\G}{\mathcal G}
\nc{\E}{\mathcal E}
\nc{\K}{\mathcal K}
\nc{\I}{\mathcal I}
\nc{\sQ}{\mathcal Q}
\nc{\X}{\mathcal X}
\nc{\Y}{\mathfrak Y}
\nc{\T}{\mathfrak T}
\nc{\LL}{\mathcal{L}}
\rnc{\S}{\mathcal S}
\nc{\M}{\mathcal M}
\nc{\sU}{\mathfrak U}
\nc{\V}{\mathfrak V}
\nc{\N}{\mathrm N}
\nc{\ra}{\rangle}
\nc{\os}{\overset}
\rnc{\O}{\mathcal O}
\nc{\J}{\mathcal J}
\theoremstyle{definition}
\newtheorem{thm}{Theorem}[subsection]
\newtheorem{lemma}[thm]{Lemma}
\newtheorem{defn}[thm]{Definition}
\newtheorem{rmk}[thm]{Remark}
\newtheorem{example}[thm]{Example}
\newtheorem{cor}[thm]{Corollary}
\newtheorem{question}[thm]{Question}
\begin{document}
\bibliographystyle{halpha-abbrv}
\title{Lefschetz theorems in flat cohomology and applications}
\author{Sean Cotner, Bogdan Zavyalov}
\begin{abstract} 
We prove a version of the Lefschetz hyperplane theorem for fppf cohomology with coefficients in any finite commutative group scheme over the ground field. As consequences, we establish new Lefschetz results for the Picard scheme.
\end{abstract}
\maketitle

\section{Introduction}

\subsection{Overview}

A Lefschetz hyperplane theorem asserts, roughly speaking, that the cohomology groups of a projective variety and one of its hyperplane sections agree in small degrees. There are many such results, e.g., for the Picard group \cite[Exp.\ XII, Cor.\ 3.6]{SGA2}, coherent cohomology \cite[III\textsubscript{1}, Thm.\ 1.3.1]{EGA}, and \'etale cohomology \cite[Exp.\ XIV, Cor.\ 3.3]{SGA4}. Our main result is a Lefschetz hyperplane theorem with coefficients in a finite commutative group scheme over a field.

\begin{thm}\label{thm:intro-main-1}(Theorem~\ref{thm:main-1}) Let $k$ be a field, let $Y$ be a projective syntomic $k$-scheme of pure dimension $N\geq d+1$, let $X\hookrightarrow Y$ be a closed syntomic subscheme, and let $G$ be a finite commutative $k$-group scheme. Then the cone 
\[
\rm{cone}\left(\rm{R}\Gamma_\rm{fppf}\left(Y, G\right) \to \rm{R}\Gamma_\rm{fppf}\left(X, G\right)\right)
\]
lies in $D^{\geq d}(\Z)$ if
\begin{enumerate}
    \item $Y = \bf{P}^N_k$ and $X$ is a global complete intersection of dimension $d$, or
    \item $X\subset Y$ is a strongly ample Cartier divisor (see Definition~\ref{defn:sufficiently-ample}). 
\end{enumerate} 
\end{thm}

The definition of strongly ample is somewhat technical, but ample Cartier divisors are automatically strongly ample in the following two situations (see Theorem~\ref{thm:derived-lefschetz}):
\begin{enumerate}
    \item $Y$ is smooth and the characteristic of $k$ is $0$;
    \item $Y$ is smooth and the characteristic of $k$ is $\geq d + 1$ and $Y$ lifts to $W_2(\ov{k})$.
\end{enumerate}
Moreover, Remark~\ref{rmk:strongly-ample-examples} shows that if $\mathcal{L}$ is an ample line bundle on $Y$ and either $Y$ has isolated singularities or $Y$ is a complete intersection inside a smooth projective variety, then for $n \gg 0$, any divisor defining $\mathcal{L}^n$ is strongly ample.

\begin{rmk} Example~\ref{exmpl:thm-fails-for-ample} shows that Theorem~\ref{thm:intro-main-1} may fail for ample (but not strongly ample) divisors $X\subset Y$. Example~\ref{example:fails-for-general-finite-group} shows that Theorem~\ref{thm:intro-main-1} may also fail for finite flat commutative $Y$-group schemes that are not defined over $k$. The assumption of syntomicity is similar to the assumptions on the Lefschetz theorems proved in \cite[Exp.\ XII, Cor.\ 3.6]{SGA2}.
\end{rmk}

We expect that there is a version of Theorem~\ref{thm:intro-main-1} for non-commutative finite $k$-group schemes, but we have not proved it. 

\begin{question}\label{question:non-commutative} Let $X\subset \bf{P}^N_k$ be a complete intersection of dimension at least $2$, and let $G$ be a finite (not necessarily commutative) $k$-group scheme. Is the natural morphism $\rm{H}^1(\bf{P}^N_k, G) \to \rm{H}^1(X, G)$ a bijection? The same question may be asked for $X \subset Y$ a strongly ample Cartier divisor in a projective syntomic
$k$-scheme $Y$ (possibly with a different definition of ``strongly ample'').
\end{question}

\begin{rmk} If both $X$, $Y$ are smooth connected projective $k$-schemes, and the ground field $k$ is algebraically closed, then Question~\ref{question:non-commutative} has a positive answer. This follows from the Lefschetz type result for Nori's fundamental group $\pi_1^{\rm{N}}(X, x)$ (see \cite[Th.~1.1]{Biswas-Holla}) and the observation that $\rm{H}^1(X, G) = \rm{Hom}_{k\text{-gp}}(\pi_1^{\rm{N}}(X, x), G)$ for any finite $k$-group scheme $G$ (see \cite[Prop.~3.11]{Nori}).
\end{rmk}

By devissage, Theorem~\ref{thm:intro-main-1} is reduced to the cases $G = \mu_\ell,\, \alpha_p,\, \mu_p,$ and $\bZ/p$, where $\ell$ is a prime number different from $p=\charac k$. The cases of $\alpha_p$ and $\bZ/p$ are reduced to questions of coherent cohomology using standard exact sequences. For $\ell \neq p$, the case of $\mu_\ell$ is settled using results in the theory of perverse sheaves. \smallskip

The case of $\mu_p$ will give us the most difficulty. Here we will find it convenient to pivot to proving a Lefschetz hyperplane theorem for the  syntomic cohomology of the Tate twists $\bZ_p(i)$. Using the Nygaard filtration, this will ultimately be reduced to proving a Lefschetz hyperplane theorem for each filtered piece in the conjugate filtration on de Rham cohomology, which has been established in \cite{ABM}. In particular, we get a Lefschetz hyperplane theorem for the syntomic cohomology of the Tate twists $\Z_p(i)$ defined in \cite{BMS2} (see also Section~\ref{section:terminology}): \smallskip

\begin{thm}\label{thm:intro-main-2}(Theorem~\ref{thm:hodge-implies-twists-lefschetz}) Let $k$ be a perfect field of characteristic $p>0$, let $Y$ be a projective syntomic $k$-scheme of pure dimension $N\geq d+1$, let $X\hookrightarrow Y$ be a closed syntomic subscheme, and let $i \geq 0$. Then the cone 
\[
C \coloneqq \rm{cone}\left(\rm{R}\Gamma_\syn\left(Y, \bf{Z}_p\left(i\right)\right) \to \rm{R}\Gamma_\syn\left(X, \bf{Z}_p\left(i\right)\right)\right)
\]
lies in $D^{\geq d}(\Z_p)$ with $\rm{H}^d(C)$ torsion-free if
\begin{enumerate}
    \item $Y = \bf{P}^N_k$ and $X$ is a global complete intersection of dimension $d$, or
    \item $X\subset Y$ is a strongly ample Cartier divisor (see Definition~\ref{defn:sufficiently-ample}). 
\end{enumerate}
\end{thm}

As a consequence of Theorem~\ref{thm:intro-main-1}, we prove a Lefschetz hyperplane theorem for the Picard schemes of projective syntomic $k$-schemes.

\begin{thm}\label{thm:intro-main-5}(Theorem~\ref{thm:SGA7-suff-ample})
Let $k$ be a field, let $Y$ be a projective syntomic $k$-scheme of pure dimension $d$, and let $X \subset Y$ be a strongly ample Cartier divisor. If $d \geq 3$, then $\Pic^\tau_{Y/k} \to \Pic^\tau_{X/k}$ is an isomorphism.
\end{thm}

In Corollary~\ref{cor:Pic-scheme-iso}, we combine Theorem~\ref{thm:intro-main-5} with \cite[Exp.\ XII, Cor.\ 3.6]{SGA2} to obtain a Lefschetz hyperplane theorem for the full Picard scheme. 

\begin{rmk} A.\,Langer has informed us that an effective version of Theorem~\ref{thm:intro-main-5} for $\Pic^\tau_{\rm{red}}$ follows from his Lefschetz type theorem for the $S$-fundamental group when $X$ and $Y$ are smooth (see \cite[Th.~10.2~and~10.4]{Langer}). 
\end{rmk}

Theorem~\ref{thm:intro-main-5} appears to be new in every dimension, even for smooth $X$ and $Y$. The main difficulty in deducing it from Theorem~\ref{thm:intro-main-1} is that the Picard schemes can be highly non-reduced in positive characteristic, so one cannot argue on the level of Picard groups, i.e., on the level of $k$-points. To overcome this issue, we need to use the structure theory of commutative group schemes over a field to obtain an isomorphism criterion (see Lemma~\ref{lemma:isomorphism-criterion}), and to verify the hypotheses of this criterion we need to generalize Theorem~\ref{thm:intro-main-1} to more general base schemes, at least for $G=\mu_p$ (see Corollary~\ref{cor:lefschetz-p-power-sufficiently-ample}). \smallskip

In the case of complete intersections in projective space, we can show that the hypothesis of strong ampleness is not necessary, and we can make a more refined statement, recovering \cite[Corollary 7.2.3]{Ces-Scholze}.

\begin{thm}\label{thm:intro-main-3}(\cite[Cor.~7.2.3]{Ces-Scholze}) Let $k$ be a field, and $X \subset \bf{P}^N_k$ be a complete intersection of dimension at least $2$. Then 
\begin{enumerate}
    \item\label{thm:SGA7-1} $\rm{Pic}(X)_{\rm{tors}}=0$;
    \item\label{thm:SGA7-2} the class of $\O_X(1)=i^*\O_{\bf{P}^N}(1)$ is a non-divisible element of $\rm{Pic}(X)$;
    \item\label{thm:SGA7-3} the group scheme $\bf{Pic}^\tau_{X/k}$ is trivial.
\end{enumerate}
\end{thm}

If $\dim X \geq 3$, then Theorem~\ref{thm:intro-main-3} was essentially settled by Grothendieck in \cite[Exp.\ XII, Cor.\ 3.6]{SGA2}. If $\dim X \geq 2$ and $X$ is smooth, then this was settled by Deligne in \cite[Exp.\ XI, Thm.\ 1.8]{SGA7_2}. A version for weighted complete intersection surfaces with certain limited singularities can be found in \cite[\textsection 1]{Lang-p-torsion}. The general case was established in \cite[Cor.~7.2.3]{Ces-Scholze}. However, Theorem~\ref{thm:intro-main-5} does not seem to follow from their methods. When we started writing this paper, we were not aware that Theorem~\ref{thm:intro-main-3} was proven in \cite{Ces-Scholze}. \smallskip

Both proofs of Theorem~\ref{thm:intro-main-3} share a similar idea of using prismatic techniques to reduce the study of flat cohomology of $\mu_p$ to studying the cohomology of certain coherent sheaves. However, the details of the proofs seem to be fairly different. Our proof is global and is based on the Lefschetz hyperplane theorem from \cite{ABM}, while the proof in \cite{Ces-Scholze} is local; in their argument they relate the Picard group of $X$ to the local Picard group of the vertex $x$ of the affine cone over $X$, and then use local techniques to study that Picard group. Namely, if $R$ is the local ring of $x$, $\rm{Pic}(X)/\Z[\O_X(1)]$ injects into $\rm{Pic}(\Spec R \smallsetminus \{x\})$, and this is good enough to prove Theorem~\ref{thm:intro-main-3}. However, the failure of the map $\rm{Pic}(X)/\Z[\O_X(1)] \to \rm{Pic}(\Spec R \smallsetminus \{x\})$ to be surjective is the main reason why their methods do not seem to be sufficient to obtain a proof of Theorem~\ref{thm:intro-main-5}. Both proofs treat all syntomic singularities uniformly in arbitrary dimension.

\subsection{Terminology}\label{section:terminology}  For a fixed prime $p$ and an object $M\in D(A)$, the {\it derived quotient} $[M/p]$ is the cone of the multiplication by $p$ map $M \xr{p} M$. \smallskip

For a field $k$ and a $k$-scheme $X$ (not necessarily of finite type), we denote the {\it derived de Rham cohomology} by $\rm{R}\Gamma_{\dR}(X/k)=\rm{R}\Gamma(X, \rm{dR}_{X/k})$; see \cite[\textsection VIII.2]{Illusie-cotangent} or \cite[\textsection 2]{bhatt-p-adic} for more details. We note that \cite[Corollary 3.10]{bhatt-p-adic} implies that $\rm{R}\Gamma_{\dR}(X/k) \simeq \rm{R}\Gamma(X, \Omega^\bullet_{X/k})$ if $X$ is smooth over a perfect field $k$ of characteristic $p>0$. However, these two complexes are usually different if $X$ is singular (or $k$ is of characteristic $0$). \smallskip

If $k$ is a perfect field of characteristic $p>0$ and $X$ is a $k$-scheme, we denote the (underived) {\it crystalline cohomology} by $\rm{R}\Gamma_\crys(X/W(k))$. We refer to \cite[\textsection 5]{BO} and \cite[\href{https://stacks.math.columbia.edu/tag/07GI}{Tag 07GI}]{stacks-project} for more details (see also \cite[Construction F.2]{Bhatt-Lurie}). \smallskip 

For an $\bf{F}_p$-scheme $X$ and an integer $i$, we define the {\it syntomic complex} $\rm{R}\Gamma_\syn(X, \Z_p(i))\in D(\Z_p)$ as in \cite[Variant 7.4.12]{Bhatt-Lurie}.

\subsection{Acknowledgements} 
We are very grateful to B.\,Bhatt for many fruitful discussions. In particular, we thank him for the suggestion to use the results from \cite{BMS2} to attack Theorem~\ref{thm:intro-main-1} and for explaining the relevant background. We are also grateful to D.\,Kubrak, S.\,Mondal, S.\,Petrov, and A.\,Prikhodko for many useful conversations. We thank A.\,Langer for several useful comments on an earlier draft. We thank the referee for many useful suggestions which greatly improved this paper. Finally, we would like to thank S.\,Naprienko for creating the website \href{https://thuses.com/}{Thuses.com}, without which this collaboration would likely not have taken place. BZ acknowledges funding through the Max Planck Institute for
Mathematics in Bonn, Germany, during the preparation of this work.

\section{Hodge $d$-equivalences and Kodaira pairs}

In this section, we recall the notion of Hodge $d$-equivalences and Kodaira pairs from \cite[\textsection 5]{ABM}. We further give some interesting examples that will be important for the rest of the paper. \smallskip

For the rest of the section, we fix a field $k$.

\subsection{Syntomic morphisms}\label{section:pro-syntomic}

The main goal of this section is to recall the definition of syntomic morphisms and discuss some of its basic properties that do not seem to be explicitly stated in the literature. All results of this section are certainly well-known to the experts. \smallskip

\begin{defn}{\cite[\href{https://stacks.math.columbia.edu/tag/00SL}{Tag 00SL}]{stacks-project}}\label{defn:syntomic-schemes} A morphism of schemes $f\colon X \to Y$ is {\it syntomic} if $f$ is flat, of finite presentation, and all fibers are local complete intersections in the sense of \cite[\href{https://stacks.math.columbia.edu/tag/00S9}{Tag 00S9}]{stacks-project}.
\end{defn}

\begin{example}\label{ex:finite-field-extension-syntomic} Any finite field extension $k\subset k'$ is syntomic by \cite[\href{https://stacks.math.columbia.edu/tag/00SF}{Tag 00SF}]{stacks-project}.
\end{example}

The next lemma provides the main source of examples of syntomic morphisms which we use.

\begin{lemma}\label{lemma:finite-flat-groups-syntomic} Let $A$ be a ring and let $G$ be a flat, finitely presented $A$-group scheme. Then $G$ is $A$-syntomic.
\end{lemma}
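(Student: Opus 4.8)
The plan is the following. Since $G\to\Spec k$ is flat and locally of finite presentation by hypothesis, by the definition of a syntomic morphism it remains only to check that its fibers are local complete intersections over the respective residue fields. Formation of fibers commutes with base change, and every fiber is a group scheme of finite type over a field, so we are reduced to proving: if $H$ is a group scheme of finite type over a field $K$, then $H$ is a local complete intersection over $K$. The property of being a local complete intersection over the base field is insensitive to field extension and can be detected after passing to $\ov K$ (see the treatment of local complete intersections over a field and their behaviour under field extension in \cite{stacks-project}), so we may assume $K=\ov K$. The locus in $H$ over which $H$ is a local complete intersection over $K$ is open, and it is stable under all translation automorphisms $t_h\colon H\to H$ with $h\in H(K)$; since $H$ is Jacobson and $K=\ov K$, the closed points are dense and the translations act transitively on them, so this locus is all of $H$ as soon as it contains the identity $e$. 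We may therefore replace $H$ by its (connected, finite type) identity component $H^0$ and show that $H^0$ is a local complete intersection at $e$.

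If $\mathrm{char}\,K=0$ this is immediate, since $H^0$ is smooth by Cartier's theorem. Suppose $\mathrm{char}\,K=p>0$. As $K=\ov K$ is perfect, $H^0_{\rm{red}}$ is a smooth connected closed subgroup scheme of $H^0$, normal because it is characteristic, and the quotient $\Phi\coloneqq H^0/H^0_{\rm{red}}$ is a finite connected (infinitesimal) $K$-group scheme. The quotient map $H^0\to\Phi$ is faithfully flat with fibers that are torsors under the smooth group $H^0_{\rm{red}}$, hence $H^0\to\Phi$ is smooth; so $H^0$ is a local complete intersection over $K$ provided $\Phi$ is. Finally, embed $\Phi$ as a closed subgroup scheme of $\GL_{n,K}$ for suitable $n$ (possible since $\Phi$ is finite, hence affine). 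The quotient $q\colon\GL_{n,K}\to\GL_{n,K}/\Phi$ exists as a $K$-scheme, is finite locally free, and is a $\Phi$-torsor; the target $\GL_{n,K}/\Phi$ is smooth over $K$, being an integral $\GL_{n,K}$-homogeneous space over a field. Since $\Phi=\GL_{n,K}\times_{\GL_{n,K}/\Phi}\{\ov e\}$ where $\ov e$ is the image of $e$, and regular immersions are preserved under flat base change, the closed immersion $\Phi\hookrightarrow\GL_{n,K}$ is regular; as $\GL_{n,K}$ is smooth over $K$, a regularly immersed closed subscheme is a local complete intersection over $K$, so $\Phi$ — and hence $H$ — is a local complete intersection over $K$.

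The only non-formal ingredients are standard structural facts about group schemes: Cartier's theorem, the fact that $H_{\rm{red}}$ is a subgroup scheme over a perfect field, the smoothness of $\GL_{n,K}/\Phi$, and the existence of the relevant quotients; together with the elementary remark that a regularly immersed closed subscheme of a $K$-smooth scheme is a local complete intersection over $K$. I expect the main obstacle to be purely organizational — keeping the reductions (to fibers, to $\ov K$, to the identity component, to $\Phi$) straight and consistently using the relative notion of ``local complete intersection over the base'' rather than the absolute one, since for non-smooth group schemes in characteristic $p$ these reductions are where all the content lies. As an alternative to the first reduction step, one may observe that the cotangent complex of any group scheme is translation-invariant, $L_{G/k}\simeq\O_G\otimes^{L}_{k}e^{*}L_{G/k}$ (via the shear automorphism of $G\times_k G$), reducing the required Tor-amplitude bound to the identity section; but this reformulation still relies on the structure theory above to treat the identity.
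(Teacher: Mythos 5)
The proof is organized almost identically to the paper's: reduce to an algebraically closed base field, split off $G_{\mathrm{red}}$ as a smooth (claimed normal) subgroup, and handle the finite connected quotient. The one real difference is the treatment of that quotient — you embed $\Phi$ into $\GL_n$ and use smoothness of the homogeneous space $\GL_n/\Phi$ to see that $\Phi\hookrightarrow\GL_n$ is a regular immersion, whereas the paper quotes Waterhouse's normal form $\Spec k[T_1,\dots,T_n]/(T_1^{p^{e_1}},\dots,T_n^{p^{e_n}})$. Both are fine; your $\GL_n$ argument is arguably more robust, since it applies verbatim to any affine closed subgroup, not just a finite one. Your additional passage to the identity component is harmless but unnecessary: $G/G_{\mathrm{red}}$ is automatically connected since $G_{\mathrm{red}}$ and $G$ share the same underlying topological space.

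There is, however, a genuine gap — one that is also present in the paper's proof as written — namely the assertion that $H^0_{\mathrm{red}}$ is normal in $H^0$ ``because it is characteristic.'' Being preserved by all $K$-automorphisms does not imply normality as a subgroup scheme: normality requires stability under conjugation by $R$-points for every $K$-algebra $R$, and one cannot argue functorially because $(H^0_{\mathrm{red}})_R$ is in general strictly smaller than $(H^0_R)_{\mathrm{red}}$ when $R$ has nilpotents. A connected counterexample over $K=\ov{K}$ of characteristic $p>2$ is $G=\alpha_p\rtimes\bG_m$ (with $\bG_m$ acting on $\alpha_p$ by scaling): here $G_{\mathrm{red}}=\bG_m$, but $(x,t)\cdot(0,s)\cdot(x,t)^{-1}=((1-s)x,\,s)$, which lies outside $\bG_m(R)$ whenever $(1-s)x\neq 0$. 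Thus $\Phi=H^0/H^0_{\mathrm{red}}$ need not be a group scheme, and neither your $\GL_n$ embedding nor the paper's appeal to Waterhouse can be applied to it. The standard repair is to replace $G_{\mathrm{red}}$ by a large Frobenius kernel $N=\ker(F^n_{G/K})$: it is normal as the kernel of a homomorphism, finite connected (hence lci, by either of the two arguments), and $G/N$ is smooth for $n\gg 0$ (cf.\ SGA3, Exp.\ VII$_{\mathrm{A}}$), so that $G\to G/N\to\Spec K$ exhibits $G$ as syntomic.
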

\begin{proof}
    This follows directly from \cite[\href{https://stacks.math.columbia.edu/tag/00SJ}{Tag 00SJ}]{stacks-project} and \cite[Exp.\ VII\textsubscript{B}, Cor.\ 5.5.1]{SGA3}.
\end{proof}

\begin{lemma}\label{lemma:syntomic-cotangent-complex} A morphism $f\colon A \to B$ is syntomic if and only if it is flat, finitely presented, and $L_{B/A}\in D(B)$ has Tor amplitude in $[-1, 0]$.
\end{lemma}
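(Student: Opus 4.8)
The plan is to observe that both characterizations in the statement already include flatness and finite presentation, so the entire content is the equivalence, for a flat finitely presented ring map $A \to B$, between ``every fibre $B \otimes_A \kappa(\mathfrak p)$ is a local complete intersection over $\kappa(\mathfrak p)$'' and ``$L_{B/A} \in D(B)$ has Tor-amplitude in $[-1,0]$''. Both conditions are local on $\Spec B$ and unaffected by localizing $B$, so I am free to localize throughout.

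For the implication from syntomic to the cotangent complex condition, I would use the structural fact (Stacks project: syntomic maps are Zariski-locally relative global complete intersections) that, after localizing on $\Spec B$, one has $B \cong A[x_1,\dots,x_n]/I$ with $I = (f_1,\dots,f_c)$ generated by a Koszul-regular sequence in $A[x_1,\dots,x_n]$. For such a presentation $I/I^2$ is a finite free $B$-module of rank $c$ and $L_{B/A[x]} \simeq (I/I^2)[1]$, by the standard computation of the cotangent complex of a quotient by a Koszul-regular ideal. Feeding this into the transitivity triangle $L_{A[x]/A}\otimes_{A[x]} B \to L_{B/A} \to L_{B/A[x]}$ and using that $L_{A[x]/A} = \Omega_{A[x]/A}$ is finite free in degree $0$ exhibits $L_{B/A}$ as a two-term complex $\bigl[\,I/I^2 \to \Omega_{A[x]/A}\otimes_{A[x]} B\,\bigr]$ of finite free $B$-modules in cohomological degrees $-1$ and $0$; in particular its Tor-amplitude lies in $[-1,0]$.

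For the converse, suppose $L_{B/A}$ has Tor-amplitude in $[-1,0]$. For a prime $\mathfrak p \subset A$ with residue field $\kappa$, flatness of $A \to B$ makes $A \to B$ and $A \to \kappa$ Tor-independent, so base change for the cotangent complex gives $L_{(B\otimes_A\kappa)/\kappa} \simeq L_{B/A}\otimes_A^L \kappa \simeq L_{B/A}\otimes_B^L (B\otimes_A\kappa)$; since derived base change does not increase Tor-amplitude, $L_{(B\otimes_A\kappa)/\kappa}$ has Tor-amplitude in $[-1,0]$. Thus I am reduced to the case $A = k$ a field: it remains to show that a finite type $k$-algebra $B$ with $L_{B/k}$ of Tor-amplitude $[-1,0]$ is a local complete intersection. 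Fix a maximal ideal $\mathfrak m$, set $R = B_{\mathfrak m}$, pick a surjection $k[x_1,\dots,x_n] \twoheadrightarrow B$ and localize at the preimage of $\mathfrak m$ to get a surjection $P \twoheadrightarrow R$ from a regular local ring with kernel $I \subset \mathfrak m_P$. In the transitivity triangle $L_{P/k}\otimes_P R \to L_{R/k} \to L_{R/P}$ the first term $\Omega_{P/k}\otimes_P R$ is finite free in degree $0$ and $L_{R/k}$ has Tor-amplitude in $[-1,0]$, so the long exact sequence forces $L_{R/P}$ to be concentrated in cohomological degree $-1$ (its $H^0 = \Omega_{R/P}$ vanishes as $P\to R$ is surjective, and there is nothing in degrees $\le -2$). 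I would then invoke the cotangent-complex criterion for the complete intersection property: for a surjection $P \twoheadrightarrow R$ of Noetherian local rings, vanishing of $H_i(L_{R/P})$ for $i \ge 2$ forces $I$ to be generated by a $P$-regular sequence (concretely, $H_2(L_{R/P})$ is computed by the first Koszul homology of a minimal generating set of $I$). Hence $R$ is a complete intersection; as $\mathfrak m$ was arbitrary, $B$ is lci over $k$.

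I expect this last step to be the one genuine difficulty: the two ``soft'' ingredients --- flat base change for $L$ and the two-term description of $L_{B/A}$ for a Koszul-regular presentation --- are routine, but the implication ``$L_{R/P}$ concentrated in degree $-1$ $\Rightarrow$ $I$ generated by a regular sequence'' is the non-formal input. Over a regular local base, as here, it is classical (Andr\'e--Quillen, via Koszul homology); it is worth isolating, since for a general base it is exactly Avramov's theorem on complete intersection homomorphisms.
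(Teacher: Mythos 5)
Your argument is correct, and it follows the same broad strategy as the paper's (reduce the hard direction to fibers over a field, then apply a vanishing theorem in Andr\'e--Quillen homology), but the execution differs in two places worth recording. In the forward direction, the paper simply cites \cite[\href{https://stacks.math.columbia.edu/tag/069K}{Tag 069K}]{stacks-project} (syntomic $\Rightarrow$ lci morphism) and \cite[\href{https://stacks.math.columbia.edu/tag/08SL}{Tag 08SL}]{stacks-project} (lci morphism $\Rightarrow$ $L_{B/A}$ has Tor-amplitude $[-1,0]$), whereas you unwind this via a Koszul-regular presentation and the transitivity triangle; that is the content of those tags, so nothing new mathematically, but the explicit two-term complex you produce is a useful sanity check. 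In the reverse direction the paper first proves pseudo-coherence of $L_{B/A}$ (with a spreading-out step to reduce from general $A$ to Noetherian $A$), then invokes the fibrewise Tor-amplitude criterion \cite[\href{https://stacks.math.columbia.edu/tag/068V}{Tag 068V}]{stacks-project} to reduce to the residue fields, and finally cites Avramov's Second Vanishing Theorem; you instead reduce directly to the fibers by Tor-independent base change (flatness of $A\to B$) together with the elementary fact that derived base change does not increase Tor-amplitude, which sidesteps the pseudo-coherence/spreading-out preliminaries. Your choice of closing input is also the minimal one: once over a field, you choose a presentation $P\twoheadrightarrow R$ with $P$ regular local and apply the classical Lichtenbaum--Schlessinger/Andr\'e--Quillen criterion (``$H^{-2}(L_{R/P})=0$ $\Rightarrow$ the kernel is generated by a regular sequence''), rather than citing Avramov's theorem for a general local homomorphism. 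Since the paper only applies Avramov's result after reducing to a finite type algebra over a field, the two citations carry the same force in this application, but isolating the regular-base case, as you do, makes the logical dependency more transparent. One small thing to keep in view when polishing: in the transitivity triangle you conclude first that $L_{R/P}$ has Tor-amplitude in $[-1,0]$, and then that it is concentrated in degree $-1$ because $H^0=\Omega_{R/P}=0$; in passing from Tor-amplitude $[-1,0]$ to a bounded two-term complex, you are using that $L_{R/P}$ is pseudo-coherent over the Noetherian local ring $R$ (so a bounded-above complex of finite Tor-amplitude is perfect). This is automatic for finite type algebras over a field, but it is the same pseudo-coherence input the paper makes explicit and is worth flagging.
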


\begin{proof}
    If $f$ is syntomic, it is a locally complete intersection morphism by \cite[\href{https://stacks.math.columbia.edu/tag/069K}{Tag 069K}]{stacks-project}. Then $L_{B/A}$ has Tor amplitude in $[-1, 0]$ by \cite[\href{https://stacks.math.columbia.edu/tag/08SL}{Tag 08SL}]{stacks-project}. \smallskip

    Now we assume that $f$ is flat, finitely presented, and $L_{B/A}$ has Tor amplitude in $[-1, 0]$. We want to conclude that the fibers of $f$ are complete intersections. First, a standard limit argument using \cite[\href{https://stacks.math.columbia.edu/tag/08QQ}{Tag 08QQ}]{stacks-project} and \cite[Prop.~4.12]{Quillen} shows that $L_{B/A}\in D(B)$ is pseudo-coherent. In this case, \cite[\href{https://stacks.math.columbia.edu/tag/068V}{Tag 068V}]{stacks-project} and \cite[\href{https://stacks.math.columbia.edu/tag/08QQ}{Tag 08QQ}]{stacks-project} ensure that it suffices to prove the claim when $A=k$ is a field. Then the result follows from \cite[(1.2) Second Vanishing Theorem]{Avramov}.
\end{proof}

\begin{rmk} Lemma~\ref{lemma:syntomic-cotangent-complex} guarantees that Definition~\ref{defn:syntomic-schemes} coincides with the definition of syntomic morphisms given in \cite[Notation 2.1]{ABM}. In particular, all results of their paper are applicable with the definition of a syntomic morphism that we use.
\end{rmk}

\begin{cor}\label{cor:syntomic-quasi-syntomic} Let $k$ be a field of characteristic $p>0$, and let $R$ be a syntomic $k$-algebra. Then $R$ is quasisyntomic in the sense of \cite[Def.~C.6 and Ex.~C.11]{Bhatt-Lurie}.
\end{cor}
\begin{proof}
 First, \cite[Ex.~C.11]{Bhatt-Lurie} implies that it suffices to show that the map $L_{R/\bf{F}_p}$ is concentrated in degrees $[-1, 0]$. Using the fundamental exact triangle of cotangent complexes (see \cite[\href{https://stacks.math.columbia.edu/tag/08QX}{Tag 08QX}]{stacks-project}) and Lemma~\ref{lemma:syntomic-cotangent-complex}, it suffices to show that $L_{k/\bf{F}_p}$ is concentrated in degree $0$. Since any field extension $\bf{F}_p\subset k$ is a filtered colimit of smooth $\bf{F}_p$-algebras, the result follows from \cite[\href{https://stacks.math.columbia.edu/tag/08R5}{Tag 08R5}]{stacks-project} and \cite[\href{https://stacks.math.columbia.edu/tag/08S9}{Tag 08S9}]{stacks-project}.
\end{proof}

\subsection{Hodge $d$-equivalences}

We begin with a definition. 

\begin{defn}\label{defn:hodge-d-equivalence}\cite[Def.~5.1]{ABM} A morphism of syntomic $k$-schemes $f\colon X \to Y$ is a {\it Hodge $d$-equivalence} if, for every $s\geq 0$, we have that
\[
\rm{cone}\Big( \rm{R}\Gamma\left(Y, \wedge^s L_{Y/k}\right) \to \rm{R}\Gamma\left(X, \wedge^s L_{X/k}\right)\Big) \text{ lies in } D^{\geq d-s}(k).
\]
\end{defn}

Roughly, Definition~\ref{defn:hodge-d-equivalence} is a formal way to say that a morphism $f$ satisfies the conclusion of the Lefschetz hyperplane theorem for Hodge cohomology groups.\smallskip

The notion of Hodge $d$-equivalence will play a crucial role in our proof of Lefschetz-type results. In fact, our strategy for proving Theorem~\ref{thm:intro-main-1} for $G = \mu_p$ is to reduce to the analogous statement for Hodge cohomology, which follows from the results of \cite{ABM} on Hodge $d$-equivalences. \smallskip

To proceed, it is important to have a good supply of interesting Hodge $d$-equivalences, and we begin by providing some examples. In the next section, we will provide more examples after reviewing the related notion of Kodaira pairs from \cite{ABM}.

\begin{defn} We say that a finite type $k$-scheme $X$ {\it lifts to $W_2\big(\ov{k}\big)$} if there is a flat, finite type $W_2\big(\ov{k}\big)$-scheme $\widetilde{X}$ with an isomorphism of $\ov{k}$-schemes $\widetilde{X}_{\ov{k}} \simeq X_{\ov{k}}$.
\end{defn}

\begin{thm}\label{thm:derived-lefschetz} Let $i\colon X \hookrightarrow Y$ be a closed immersion of syntomic projective $k$-schemes.
\begin{enumerate}
    \item\label{thm:derived-lefschetz-1} If $Y = \bf{P}^N_k$ and $X \subset Y$ is a $d$-dimensional (global) complete intersection over $k$, then $i$ is a Hodge $d$-equivalence;
    \item If $k$ is a field of characteristic $0$, the $k$-scheme $Y$ is smooth projective of pure dimension $d+1$, and $X\subset Y$ is an ample Cartier divisor, then $i$ is a Hodge $d$-equivalence;
    \item If $k$ is a field of characteristic $p>0$, the $k$-scheme $Y$ is smooth projective of pure dimension $d+1$ and lifts to $W_2(\ov{k})$, and $X\subset Y$ is an ample Cartier divisor, then $i$ is a Hodge $\left(\rm{inf}(p, d+1)-1\right)$-equivalence.
\end{enumerate}
\end{thm}
\begin{proof}
    The first claim is \cite[Prop.~5.3]{ABM}, and the second follows from \cite[Ex.~5.6]{ABM} and \cite[Prop.~5.7]{ABM}. Using \cite[Cor.~2.8]{Deligne-Illusie}, the proof of the third claim is very similar to that of \cite[Prop.~5.7]{ABM}, but we spell out the details at the referee's request. \smallskip
    
    First, we can assume that $k=\ov{k}$ is algebraically closed. Let $n = \rm{inf}(p, d+1)$, and denote the ample line bundle $\O_Y(X)$ simply by $\O_Y(1)$. Then \cite[Cor.~2.8]{Deligne-Illusie} implies that
    \begin{equation}\label{eqn:Deligne-Illusie-vanishing}
        \rm{R}\Gamma(Y, \wedge^s L_{Y/k} \otimes \O_Y(-r)) \simeq \rm{R}\Gamma(Y, \Omega^s_{Y/k}(-r)) \in D^{\geq n-s}(k)
    \end{equation}
    for any $s\geq 0$ and all $r>0$. To finish the proof, it suffices to show the following claim: \smallskip
    
    {\it Claim.} For all $r, s\geq 0$, we have
    \begin{equation}\label{eqn:claim}
        C_{s, r} \coloneqq \rm{cone}\left( \rm{R}\Gamma(Y, \Omega^s_{Y/k}(-r)) \to \rm{R}\Gamma(X, \wedge^s L_{X/k}(-r))\right) \in D^{\geq n-s-1}(k).
    \end{equation}
    We prove Claim by induction on $s$.\smallskip
    
    {\it Base of induction. $s=0$.} In this case, we have a short exact sequence
    \begin{equation}\label{eqn:divisor-ses}
    0\to \O_Y(-r-1) \to \O_Y(-r) \to i_*\O_X(-r) \to 0. 
    \end{equation}
    Therefore, (\ref{eqn:Deligne-Illusie-vanishing}) implies that $C_{0, r} \simeq \rm{R}\Gamma\big(Y, \O_Y(-r-1)\big)[1] \in D^{\geq n-1}(k)$.\smallskip
    
    {\it Inductive step.} We fix an integer $s>0$ and assume that Claim is verified for $s-1$ and all $r> 0$. We first note that Claim for $s-1$ and Equation~(\ref{eqn:Deligne-Illusie-vanishing}) imply that, for $r>0$,
    \begin{equation}\label{eqn:induction-step}
    \rm{R}\Gamma(X, \wedge^{s-1} L_{X/k}(-r)) \in D^{\geq n-s}(k).
    \end{equation}

    For $r\geq 0$, we write the map $\rm{R}\Gamma(Y, \Omega^s_{Y/k}(-r)) \to \rm{R}\Gamma(X, \wedge^s L_{X/k}(-r))$ as the composition
    \[
    \rm{R}\Gamma(Y, \Omega^s_{Y/k}(-r)) \xr{f_s} \rm{R}\Gamma(X, i^*\Omega^s_{Y/k}(-r)) \xr{g_s} \rm{R}\Gamma(X, \wedge^s L_{X/k}(-r)).
    \]
    
    It suffices to see that the cones of $f_s$ and $g_s$ lie in $D^{\geq n-s-1}(k)$. Note (\ref{eqn:divisor-ses}) implies that the cone of $f_s$ can be identified with $\rm{R}\Gamma(Y, \Omega^s_{Y/k}(-r-1))[1]$. This complex lies in $D^{\geq n-s-1}(k)$ due to (\ref{eqn:Deligne-Illusie-vanishing}). This finishes the proof for $f_s$.\smallskip
    
    For the second map $g_s$, we note that the fundamental triangle of cotangent complexes (see \cite[(2.1.5.6) on p.\,138]{Illusie-cotangent}) and the computation $L_{X/Y} \simeq \O_X(-1)[1]$ (see \cite[Ch.\,III, Cor.\,3.2.7]{Illusie-cotangent}) imply that we have a distinguished triangle
    \[
    \O_X(-1) \to i^*\Omega^1_{Y/k} \to L_{X/k}.
    \]
    This allows us to regard $i^*\Omega^1_{Y/k}$ as a (two-term) filtered object in $D(X)$. Passing to wedge powers, we obtain a filtration on $i^*\Omega^s_{Y/k}$ (see \cite[Ch.\,V, Prop.\,4.2.5 and (4.2.7)]{Illusie-cotangent}). Since $\O_X(-1)$ is a line bundle, this filtration degenerates to a distinguished triangle
    \[
    \wedge^{s-1} L_{X/k}(-1) \to i^*\Omega^s_{Y/k} \to \wedge^s L_{X/k}.
    \]
    By twisting and taking derived global sections, we get the following distinguished triangle:
    \[
    \rm{R}\Gamma(X, \wedge^{s-1} L_{X/k}(-r-1)) \to \rm{R}\Gamma(X, i^*\Omega^s_{Y/k}(-r)) \to \rm{R}\Gamma(X, \wedge^s L_{X/k}(-r)). 
    \]
    Therefore, the claim follows from (\ref{eqn:induction-step}), which implies that
    \[
    \rm{cone}\left(g_s \right) \simeq \rm{R}\Gamma(X, \wedge^{s-1} L_{X/k}(-r-1))[1]\in D^{\geq n-s-1}(k). \qedhere 
    \]
\end{proof}

\subsection{Kodaira pairs}

In this subsection, we recall the definition of Kodaira pairs and use them discuss some other examples of Hodge $d$-equivalences which will be important for us.

\begin{defn}\label{defn:kodaira-pair}\cite[Def.~5.5]{ABM}  A Kodaira pair is a $d$-dimensional $k$-scheme $Y$ and an ample line bundle $\cal{L}$ such that $\rm{R}\Gamma(Y, \wedge^s L_{Y/k} \otimes \cal{L}^{-r})\in D^{\geq d-s}(k)$ for all $s\geq 0$ and all $r>0$.
\end{defn}

The main point of Definition~\ref{defn:kodaira-pair} is that, if $(Y, \cal{L})$ is a Kodaira pair, then \cite[Prop.~5.7]{ABM} ensures that any section $s\in \cal{L}(Y)$ defines a Hodge $(d-1)$-equivalence $H=\rm{V}_Y(s) \hookrightarrow Y$.\smallskip

The main goal of this section is to construct enough ``asymptotic" examples of Kodaira pairs. For this, we will need a version of Fujita's Vanishing Theorem for nef bundles. We recall that a vector bundle $\cal{E}$ on a projective $k$-scheme $X$ is {\it nef} if the tautological bundle $\O_{\bf{P}_X(\cal{E})/X}(1)$ is a nef line bundle on $\bf{P}_X(\cal{E})$ (see \cite[Def.~1.4.1]{positivity-1} and \cite[Def.~6.1.1]{positivity-2}).

\begin{lemma}\label{lemma:vanishing-ample} Let $X$ be a projective $k$-scheme, $\F$ a finite rank vector bundle on $X$, $\cal{E}$ a nef vector bundle on $X$, and $\cal{L}$ an ample line bundle on $X$. Then there is an integer $n_0\geq 0$ such that 
\[ 
\rm{H}^i\left(X, \F\otimes \Sym^a(\cal{E}) \otimes \cal{L}^{\otimes b}\right)=0
\]
for $i>0$, $a, b\geq n_0$. 
\end{lemma}
\begin{proof}
    We consider the morphism $p\colon P\coloneqq \bf{P}_X(\cal{E}) \to X$ with antitautological bundle $\O_P(1)$. Since $\cal{E}$ is nef, the line bundle $\O_P(1)$ is also nef, so \cite[Ex.~1.4.4(i)]{positivity-1} and the fact that tensor products of nef line bundles are nef ensure that $\O_P(n) \otimes p^*\cal{L}^{\otimes n'}$ is nef for any integers $n,n'\geq 0$. Furthermore, \cite[\href{https://stacks.math.columbia.edu/tag/0892}{Tag 0892}]{stacks-project} guarantees that there is an integer $k>0$ such that $\O_P(1)\otimes p^*\cal{L}^{\otimes \ell}$ is ample for all $\ell \geq k$. Therefore, Fujita's vanishing (see \cite[\textsection 5, Theorem]{Fujita} or \cite[Th.~1.5]{Keeler} and \cite{Keeler-erratum}) implies that there is a constant $m_0\geq 0$ such that
    $\rm{H}^i\left(P, p^*\big(\F\otimes \cal{L}^{\otimes b}\big) \otimes \O_P(a)\right)=0$
    for $a\geq m_0$, $b\geq km_0$, and $i>0$. Now the projection formula and Serre's computation (see \cite[\href{https://stacks.math.columbia.edu/tag/01E8}{Tag 01E8}]{stacks-project} and \cite[\href{https://stacks.math.columbia.edu/tag/01XX}{Tag 01XX}]{stacks-project}) imply that 
    \[
    \rm{R}p_*\left(p^*\big(\F \otimes \cal{L}^{\otimes b}\big)\otimes \O_P(a)\right) \simeq \F\otimes \cal{L}^{\otimes b} \otimes^L \rm{R}p_*\O_P(a) \simeq \F\otimes \cal{L}^{\otimes b} \otimes \Sym^a\left(\cal{E} \right). 
    \]
    Therefore, we conclude that
    \[
    \rm{R}\Gamma\left(X, \F \otimes \Sym^a\left(\cal{E}\right) \otimes \cal{L}^{\otimes b}\right) \simeq \rm{R}\Gamma\left(P, p^*(\F\otimes \cal{L}^{\otimes b}) \otimes \O_P(a)\right) \in D^{\leq 0}(k)
    \]
    for $a\geq m_0$, $b\geq km_0$. This gives the desired vanishing by setting $n_0=km_0$.
\end{proof}

The following lemma gives an important source of examples of Kodaira pairs.

\begin{lemma}\label{lemma:nef-normal-bundle} Let $Y$ be a smooth projective $k$-scheme of dimension $N$, let $X$ be a syntomic $k$-scheme of pure dimension $d$, and let $\iota \colon X \to Y$ be a closed immersion of $k$-schemes. Let $\cal{I}_X$ be the ideal sheaf of $X$ in $Y$, and $\cal{L}$ an ample line bundle on $X$. If the normal bundle $\cal{N}\coloneqq \cal{N}_{X/Y}=(\cal{I}_X/\cal{I}_X^2)^{\vee}$ is nef, then there is an integer $r$ such that $(X, \cal{L}^n)$ is a Kodaira pair for any $n\geq r$. 
\end{lemma}
    In the proof below, we always use the notation $\Sym^n, \wedge^n, \Gamma^n$, and $(-)^{\vee}=\rm{R}\ud{\cal{H}om}_X(-, \O_X)$ in the derived sense (see \cite[Ch.\,V, \textsection 4]{Illusie-cotangent} or  \cite[\textsection 25.2]{Lurie-spectral} for the construction and basic properties of these functors). These functors coincide with their naive analogues when applied to vector bundles. We recall that by definition, if $\mathcal{E}$ is a vector bundle in degree $0$, then $\Gamma^n(\mathcal{E}) \coloneqq (\mathcal{E}^{\otimes n})^{\Sigma_n}$.
\begin{proof}
    First, we note that \cite[\href{https://stacks.math.columbia.edu/tag/0DWA}{Tag 0DWA}]{stacks-project} implies that $X$ is a Gorenstein scheme. Therefore, \cite[\href{https://stacks.math.columbia.edu/tag/0FVV}{Tag 0FVV}]{stacks-project} and \cite[\href{https://stacks.math.columbia.edu/tag/0BFQ}{Tag 0BFQ}]{stacks-project} imply that there is a {\it line bundle} $\omega_X$ such that the dualizing complex $\omega^\bullet_X$ is isomorphic to $\omega_X[d]$. Thus, Grothendieck duality (see \cite[\href{https://stacks.math.columbia.edu/tag/0B6I}{Tag 0B6I}]{stacks-project}) and perfectness of $\wedge^s L_{X/k}$ imply that
    \[
    \rm{R}\Gamma(X, \wedge^s L_{X/k} \otimes \cal{L}^{-n}) = \rm{R}\Gamma(X, \omega_X[d] \otimes (\wedge^s L_{X/k})^\vee \otimes \cal{L}^n)^{\vee}
    \]
    for all $s$ and $n$. Therefore, it suffices to show that there is an integer $r$ such that 
    \begin{equation}\label{equation:reduction-step-1}
    \rm{R}\Gamma(X, \omega_X \otimes (\wedge^s L_{X/k})^\vee \otimes \cal{L}^n) \in D^{\leq s}(k)
    \end{equation}
    for any $s\geq 0$ and $n\geq r$. \smallskip

    Note that \cite[Ch.\,III, Cor.~3.2.7]{Illusie-cotangent} implies that there is an isomorphism
    \[
    L_{X/k} \simeq [0 \to \cal{I}_X/\cal{I}_X^2 \to \Omega^1_{Y/k}|_X \to 0],
    \]
    where $\Omega^1_{Y/k}|_X$ is in degree $0$. Therefore, this allows us to regard $L_{X/k}$ as a (two-term) filtered object in $D(X)$ with associated graded pieces $\rm{gr}^0 L_{X/k}= \cal{I}_X/\cal{I}_X^2[1] =\cal{N}^{\vee}[1]$ and $\rm{gr}^1 L_{X/k}= \Omega^1_{Y/k}|_{X}$. \smallskip
    
    Now \cite[Ch.\ V, Prop.\ 4.2.5 and (4.2.7)]{Illusie-cotangent} implies that $\wedge^s L_{X/k}$ admits a finite decreasing filtration with associated graded pieces gives by $\rm{gr}^i\wedge^s L_{X/k} \simeq \wedge^{s-i}\left(\cal{N}^{\vee}[1]\right) \otimes^L \wedge^i\left(\Omega^1_{Y/k}|_X\right)$ for $i=0, \dots, s$. For brevity, we denote $\wedge^i(\Omega^1_{Y/k}|_X)$ by $\cal{G}_i$. \smallskip

    Now we use \cite[Rmk.\ 25.2.2.4, Prop.\ 25.2.4.2]{Lurie-spectral} to write $\wedge^j\left(\cal{N}^{\vee}[1]\right) \simeq \Gamma^j\left(\cal{N}^{\vee}\right)[j] \simeq \Sym^j(\cal{N})^{\vee}[j]$. Passing to duals, we get a finite (increasing) filtration on $\left(\wedge^s L_{X/k}\right)^\vee$ with associated graded pieces
    \begin{align}\label{equation:reduction-step-2}
    \rm{gr}_i \left(\left(\wedge^s L_{X/k}\right)^\vee\right) \simeq \left(\rm{gr}^{s-i} \wedge^s L_{X/k}\right)^{\vee} \simeq \left(\Gamma^i(\cal{N}^{\vee})[i]\right)^\vee\otimes^L \left(\cal{G}_{s-i}\right)^\vee \simeq \Sym^{i}\left(\cal{N}\right) \otimes \cal{G}_{s-i}^\vee[-i]
    \end{align}

    for $i=0, \dots, s$. Combining (\ref{equation:reduction-step-1}) and (\ref{equation:reduction-step-2}), we conclude that it suffices to find an integer $r\geq 0$ such that
    \begin{equation}\label{eqn:desired-vanishing}
    \rm{R}\Gamma(X, \omega_X \otimes \Sym^{i}\left(\cal{N}\right) \otimes \cal{G}_{s-i}^\vee \otimes \cal{L}^n) \in D^{\leq 0}(k)
    \end{equation}
    for any $s\geq 0$, $0 \leq i \leq s$, and $n\geq r$. For this, we apply Lemma~\ref{lemma:vanishing-ample} to the vector bundles $\F_j\coloneqq \omega_X \otimes \cal{G}_{j}^\vee$, the nef vector bundle $\cal{N}$, and the ample line bundle $\cal{L}$ to find $r_0, \dots, r_N$ such that
    \begin{equation}\label{eqn:vanishing-asymptotical}
    \rm{R}\Gamma(X, \omega_X\otimes \Sym^{a}\left(\cal{N}\right) \otimes \cal{G}_{j}^\vee \otimes \cal{L}^b)\in D^{\leq 0}(k)
    \end{equation}
    for any $0 \leq j \leq N$ and $a,b \geq r_i$. We set $r'\coloneqq \rm{max}(r_0, \dots, r_N)$. Serre vanishing \cite[\href{https://stacks.math.columbia.edu/tag/0B5U}{Tag 0B5U}]{stacks-project} ensures that, for $0 \leq j \leq N$ and $0 \leq t \leq r'-1$, we can find an integer $r_{i, j}$ such that 
    \begin{equation}\label{eqn:vanishing-for-small-i-j}
    \rm{R}\Gamma(X, \omega_X\otimes \Sym^{t}\left(\cal{N}\right) \otimes \cal{G}_{j}^\vee \otimes \cal{L}^n)\in D^{\leq 0}(k)
    \end{equation}
    for $n\geq r_{i, j}$. Using (\ref{eqn:vanishing-asymptotical}), (\ref{eqn:vanishing-for-small-i-j}), and the fact that $\cal{G}_{s-i}^\vee=0$ for $s-i>N$, we see that $r=\rm{max}(r', r_{i, j})$ does the job.
\end{proof}

Now we are ready to give more examples of Hodge $d$-equivalences:

\begin{cor}\label{cor:example-nef-normal-bundle} Let $\iota\colon X \to Y$ be a closed immersion as in Lemma~\ref{lemma:nef-normal-bundle}, and let $\cal{L}$ be an ample line bundle on $X$. Then there is an integer $n_0$ such that for all $n\geq n_0$ and any effective Cartier divisor $X\subset Y$ defined by a section of $\cal{L}^n$, the morphism $X \to Y$ is a Hodge $d$-equivalence in either of the following two cases:
\begin{enumerate}
    \item $X$ has isolated singularities and $Y=\bf{P}^N_k$;
    \item $Y$ has pure dimension $N$, and there is a nef rank $N-d$ vector bundle $\cal{E}$ on $Y$ with a section $s\in \Gamma(Y, \cal{E})$ such that $X=\rm{V}_Y(s)$.
\end{enumerate}
\end{cor}
\begin{proof}
    Lemma~\ref{lemma:nef-normal-bundle} and \cite[Prop.~5.7]{ABM} ensure that it suffices to show that the normal bundle $\cal{N}_{X/Y}=\left(\cal{I}_X/\cal{I}_X^2\right)^{\vee}$ is nef in either of these two cases. In the first case, this follows directly from \cite[Th.~2.16]{thickenings} and the observation that ample vector bundles are nef. \smallskip

    In the second case, we consider the Koszul complex $\rm{Kos}(\cal{E}^\vee; s)$ associated to the morphism $s\colon \cal{E}^\vee \to \O_Y$ \cite[\href{https://stacks.math.columbia.edu/tag/062K}{Tag 062K}]{stacks-project}, \cite[App.\ A.5]{Fulton}. By construction, $\cal{H}^0\left(\rm{Kos}(\cal{E}^\vee; s)\right) \simeq \O_Y/\cal{I}_X$. Furthermore, \cite[\href{https://stacks.math.columbia.edu/tag/02JN}{Tag 02JN}]{stacks-project} and \cite[\href{https://stacks.math.columbia.edu/tag/063I}{Tag 063I}]{stacks-project} imply that $\cal{H}^i\left(\rm{Kos}(\cal{E}^\vee; s) \right)=0$ for $i > 0$, so 
    \[
    \rm{Kos}^{\leq -1}(\cal{E}^\vee; s) \to \cal{I}_X
    \]
    is a finite resolution of $\cal{I}_X$ by vector bundles. Since all differentials in $\rm{Kos}(\cal{E}^\vee; s)$ vanish on $X$, we conclude that $\cal{I}_X/\cal{I}_X^2 \simeq \cal{E}^\vee|_X$. Therefore $\cal{N}_{X/Y}=\cal{E}|_X$ is nef by \cite[Prop.~6.1.2(ii)]{positivity-2}.
\end{proof}

Corollary~\ref{cor:example-nef-normal-bundle} motivates the following definition:

\begin{defn}\label{defn:sufficiently-ample} An effective Cartier divisor $D\subset X$ in a projective $k$-scheme of pure dimension $d+1$ is {\it strongly ample} if $D \to X$ is a Hodge $d$-equivalence and $X \smallsetminus D$ is an an affine subscheme. 
\end{defn}

\begin{rmk}\label{rmk:strongly-ample-examples} Corollary~\ref{cor:example-nef-normal-bundle} implies that an effective divisor $D\subset X$ defined by a sufficiently high power of an ample line bundle is strongly ample provided that $X$ has isolated singularities, or can be realized as a ``complete intersection'' inside a smooth projective variety. We do not know whether an analogous statement holds for an arbitrary projective syntomic $k$-scheme $X$. 
\end{rmk}

\begin{rmk} Theorem~\ref{thm:derived-lefschetz} implies that any ample Cartier divisor in a smooth projective variety $Y$ is strongly ample in either of the following situations:
\begin{enumerate}
    \item the ground field $k$ is of characteristic $0$;
    \item the ground field $k$ is of characteristic $p>0$, $\dim Y\leq p$, and $Y$ admits a lift over $W_2\left(\ov{k}\right)$.
\end{enumerate}
\end{rmk}

\section{Lefschetz hyperplane theorem for flat cohomology}

\subsection{$\bf{Z}_p(i)$ and $\mu_p$ coefficients}\label{section:lefschetz-p-part}

In this section, we prove the Lefschetz hyperplane theorem for $\mu_p$-cohomology groups for Hodge $d$-equivalences over a perfect field. More generally, we show it for $\bf{Z}_p(i)$-cohomology groups for all $i\geq 0$. \smallskip

For the rest of the section, we fix a perfect field $k$ of characteristic $p>0$.

\begin{thm}\label{thm:hodge-implies-twists-lefschetz} Let $X \to Y$ be a Hodge $d$-equivalence of syntomic $k$-schemes. For $i\geq 0$, the cone 
\[
C\coloneqq \rm{cone}\left(\rm{R}\Gamma_\syn\left(Y, \bf{Z}_p\left(i\right)\right) \to \rm{R}\Gamma_\syn\left(X, \bf{Z}_p\left(i\right)\right)\right)
\]
lies in $D^{\geq d}(\bf{Z}_p)$ and $\rm{H}^d(C)$ is torsion-free. 
\end{thm}

We will give a proof shortly, but before doing so we discuss its main application for our purposes. 

\begin{cor}\label{cor:lefschetz-flat-mu_p} Let $X \to Y$ be a Hodge $d$-equivalence of syntomic $k$-schemes. Then 
\[
C \coloneqq \rm{cone}\left(\rm{R}\Gamma_\fppf\left(Y, \mu_p\right) \to \rm{R}\Gamma_\fppf\left(X, \mu_p\right)\right) \in D^{\geq d}(\bf{F}_p).
\]
\end{cor}
\begin{proof}
    We note that \cite[Prop.~7.5.6]{Bhatt-Lurie} and \cite[Th.~11.7]{Brauer_3} imply that 
    \[
    \big[\rm{R}\Gamma_\syn\left(Y, \bf{Z}_p\left(1\right)\right)/p\big]  \simeq \big[\rm{R}\Gamma_{\rm{fppf}}(Y, \bf{G}_m)/p\big][-1] \simeq \rm{R}\Gamma_{\rm{fppf}}(Y, \mu_p)
    \]
    and similarly for $X$. Combining these observations with Theorem~\ref{thm:hodge-implies-twists-lefschetz}, we conclude. 
\end{proof}

Now we turn to the proof of Theorem~\ref{thm:hodge-implies-twists-lefschetz}. The main idea of the proof is to deduce it through a series of reductions from the definition of Hodge $d$-equivalences. \smallskip

For a syntomic $k$-scheme $X$, we refer to Corollary~\ref{cor:syntomic-quasi-syntomic}, \cite[Warning 4.6.2, Prop.~5.1.1]{Bhatt-Lurie}\footnote{Strictly speaking, \cite[Prop.~5.1.1]{Bhatt-Lurie} constructs the Nygaard filtration only in the affine case. However, similarly to \cite[Not.~5.5.23]{Bhatt-Lurie}, this can be formally extended to an arbitrary $k$-scheme.} for the definition of the {\it Nygaard filtration} 
\[
\rm{Fil}_\N^\bullet \rm{R}\Gamma_\crys(X/W(k)) \simeq \rm{Fil}_\N^\bullet F^*\rm{R}\Gamma_{\Prism}(X/W(k)).
\]

\begin{lemma}\label{lemma:hodge-equivalence-implies-nygaard} Let $X \subset Y$ be a Hodge $d$-equivalence of syntomic $k$-schemes. For $i\geq 0$, the cone
\[
C\coloneqq \rm{cone}\left(\Fil_\N^i\rm{R}\Gamma_\dcrys\left(Y/W(k)\right) \to \Fil_\N^i \rm{R}\Gamma_\dcrys\left(X/W(k)\right)\right)
\]
lies in $D^{\geq d}(\bf{Z}_p)$ and $\rm{H}^d(C)$ is torsion-free. 
\end{lemma}
\begin{proof}
We argue by induction on $i\geq 0$. The case of $i=0$ is clear from \cite[Rem.~5.2]{ABM} since $\Fil_\N^0 \rm{R}\Gamma_\dcrys(X/W(k))\simeq \rm{R}\Gamma_\dcrys(X/W(k))$ and the same applies to $Y$. \smallskip

Now fix $i \geq 0$ and suppose we know the claim for $i$. The global version of \cite[Rem.~5.1.2]{Bhatt-Lurie} and the de Rham comparison Theorem (see \cite[Prop.~5.2.5]{Bhatt-Lurie}) imply\footnote{In the formula below, we implicitly use that the Frobenius morphism $F\colon W(k) \to W(k)$ is an isomorphism and preserves the ideal $(p)\subset W(k)$. Therefore, the formula in \cite[Prop.~5.2.5]{Bhatt-Lurie} can be Frobenius ``untwisted''. We also use that the Breuil-Kisin twists can be canonically trivialized for the prism $(W(k), (p))$.} that
\[
\rm{gr}^i_\N\rm{R}\Gamma_{\dcrys}(X/W(k)) \simeq \rm{gr}^i_\N F^*\rm{R}\Gamma_{\Prism}(X/W(k)) \simeq \rm{Fil}_i^{\rm{\conj}} \rm{R}\Gamma_{\overline{\Prism}}(X/W(k)) \simeq \rm{Fil}_i^{\rm{conj}} (F^*)^{-1} \rm{R}\Gamma(X, \rm{dR}_{X/k}),
\]
where $\rm{dR}_{X/k}$ is the derived de Rham complex of $X$ and $F\colon W(k) \to W(k)$ is the Frobenius morphism. Since the Frobenius morphism $F$ is an isomorphism and $\Z_p$-linear, we can choose a $\Z_p$-linear isomorphism $(F^*)^{-1} \rm{R}\Gamma(X, \rm{dR}_{X/k}) \simeq \rm{R}\Gamma(X, \rm{dR}_{X/k})$. Therefore, we have the following commutative diagram of exact triangles in $D(\Z_p)$:
\[
\begin{tikzcd}
\Fil_\N^{i+1}\rm{R}\Gamma_\dcrys\left(Y/W(k)\right)\arrow{d} \arrow{r} & \Fil_\N^i\rm{R}\Gamma_\dcrys\left(Y/W(k)\right) \arrow{r}{} \arrow{d} & \Fil^\conj_i\rm{R}\Gamma_{\dR}(Y/k) \arrow{d} \\
\Fil_\N^{i+1}\rm{R}\Gamma_\dcrys\left(X/W(k)\right) \arrow{r} & \Fil_\N^i\rm{R}\Gamma_\dcrys\left(X/W(k)\right) \arrow{r}{} & \Fil^\conj_i\rm{R}\Gamma_{\dR}(X/k),
\end{tikzcd}
\]
where $\rm{Fil}^\bullet_{\rm{conj}}\rm{R}\Gamma_{\dR}(X/k)$ is the conjugate filtration on the derived de Rham complex of $X$ (and similarly for $Y$). Denote by $C$, $C'$, and $C''$ cones of the left, middle, and right vertical maps respectively. Now \cite[Rem.~5.2]{ABM} shows that $C'' \in D^{\geq d}(\Z_p)$ and the induction hypothesis gives that $C'\in D^{\geq d}(\Z_p)$ with $\rm{H}^d(C')$ torsion-free. This formally implies that $C\in D^{\geq d}(\Z_p)$ and that $\rm{H}^d(C)$ is torsion-free.  
\end{proof}

Now we are ready to prove Theorem~\ref{thm:hodge-implies-twists-lefschetz}.

\begin{proof}[Proof of Theorem~\ref{thm:hodge-implies-twists-lefschetz}]
    We note that \cite[Th.~5.6.2 and Var.~7.4.12]{Bhatt-Lurie} imply that we have the following commutative diagram of exact triangles: 
\[
    \begin{tikzcd}
    \rm{R}\Gamma_\syn\left(Y, \bf{Z}_p(i) \right) \arrow{d} \arrow{r} & \Fil_\N^i\rm{R}\Gamma_\dcrys\left(Y/W(k)\right) \arrow{r}{} \arrow{d} & \rm{R}\Gamma_\dcrys(Y/W(k)) \arrow{d} \\
    \rm{R}\Gamma_\syn\left(X, \bf{Z}_p(i) \right) \arrow{r} & \Fil_\N^i\rm{R}\Gamma_\dcrys\left(X/W(k)\right) \arrow{r}{} & \rm{R}\Gamma_\dcrys(X/W(k)).
    \end{tikzcd}
\]
Now, as in the proof of Lemma~\ref{lemma:hodge-equivalence-implies-nygaard}, we see that it suffices to prove the claim for each $\Fil_{\N}^i\rm{R}\Gamma_\crys(-/W(k))$ (including $i=0$). This follows directly from Lemma~\ref{lemma:hodge-equivalence-implies-nygaard}.
\end{proof}

\subsection{$\mu_\ell$ coefficients}\label{section:lefschetz-non-p-part}

In this section, we give a proof of the Lefschetz hyperplane theorem for $\mu_\ell$ coefficients. The proof is probably well-known to the experts, but it seems hard to extract from the literature. The main difficulty is that we do not require the ambient space $Y$ to be smooth, but only syntomic (also, see \cite[Appendix B]{Poonen-Voloch} for the case of a smooth ambient space $Y$).  \smallskip

For the rest of the section, we fix a separably closed field $k$ (possibly of characteristic $0$) and a prime number $\ell$ not equal to the characteristic of $k$. \smallskip

We recall that there is a well-behaved theory of perverse $\bf{F}_\ell$-sheaves on finite type $k$-schemes; see \cite[Intro to Ch.\ 4]{BBD} or \cite[\textsection 4]{Bhatt-Hansen}\footnote{This is written for rigid-analytic spaces, but similar (and, in fact, easier) proofs work in the algebraic situation.} for a more detailed discussion. We only mention two main results that we will need in this section.

\begin{lemma}\label{lemma:perverse-properties} Let $X$ a finite type $k$-scheme of pure dimension $d$. Then
\begin{enumerate}
    \item the sheaf $\ud{\bf{F}}_\ell[d]$ is a perverse sheaf on $X$ if $X$ is $k$-syntomic;
    \item for a perverse $\bf{F}_\ell$-sheaf $\cal{L}$, the complex $\rm{R}\Gamma_c(X_\et, \cal{L})$ lies in $D^{\geq 0}(\bf{F}_\ell)$ if $X$ is affine. 
\end{enumerate}
\end{lemma}
\begin{proof}
    The first claim is \cite[Cor.\ 1.4]{Il-perverse}; the second is \cite[Thm.\ 2.4]{Il-perverse} or \cite[Thm.\ 4.1.1]{BBD}.
\end{proof}

For our next result, we drop the assumption that $k$ is separably closed:

\begin{thm}\label{thm:etale-lefschetz} Let $k$ be a field, let $Y$ be a syntomic proper $k$-scheme of pure dimension $d+1$, let $X\subset Y$ be a Cartier divisor such that $Y\smallsetminus X$ is affine (e.g., $X$ is an ample Cartier divisor), let $\ell$ be a prime different from $\charac k$, and let $G$ be a finite $\ell^\infty$-torsion $k$-group scheme. Then
\[
C\coloneqq \rm{cone}\left(\rm{R}\Gamma_{\fppf}(Y, G) \to \rm{R}\Gamma_{\fppf}(X, G) \right) \in D^{\geq d}(\Z).
\] 
\end{thm}
\begin{proof}
    In this proof, we feel freely use that $G$ is \'etale, and so fppf cohomology with $G$-coefficients coincide with analogous \'etale cohomology (see \cite[Th.~11.7]{Brauer_3}). \smallskip

    We first assume that $k$ is separably closed. In this case, $G$ has a finite filtration with associated graded pieces isomorphism to $\ud{\bf{F}}_\ell$. This it suffices to prove the claim for $G=\ud{\bf{F}}_\ell$. In this case, we then denote the complement of $X$ in $Y$ by $U$. Then \cite[\href{https://stacks.math.columbia.edu/tag/0GKP}{Tag 0GKP}]{stacks-project} and properness of $Y$ imply that we have an exact triangle
    \[
    \rm{R}\Gamma_c(U_\et, \ud{\bf{F}}_\ell) \to \rm{R}\Gamma_\et(Y, \ud{\bf{F}}_\ell) \to \rm{R}\Gamma_\et(X, \ud{\bf{F}}_\ell).
    \]
    By Lemma~\ref{lemma:perverse-properties}, $\ud{\bf{F}}_\ell[d+1]$ is a perverse sheaf on $U$. Therefore the same lemma implies that $\rm{R}\Gamma_c(U_\et, \ud{\bf{F}}_\ell) \in D^{\geq d+1}(\Z)$, so $C$ lies in $D^{\geq d}(\Z)$. \smallskip

    If $k$ is an arbitrary field, we denote its absolute Galois group by $\rm{Gal}_k$ and note that 
    \[
    \rm{R}\Gamma_\et(X, G) \simeq \rm{R}\Gamma_{\rm{cont}}(\rm{Gal}_k, \rm{R}\Gamma_\et(X_{k^{\rm{sep}}}, G_{k^{\rm{sep}}})),
    \]
    and similarly for $Y$. Thus the general result follows from the special case $k = k^{\rm{sep}}$. 
\end{proof}

\subsection{Finite flat commutative group scheme coefficients}

In this section, we prove the general version of the Lefschetz hyperplane theorem. The strategy is to reduce the general case to the cases of finite flat group schemes $G=\mu_\ell$, $\mu_p$, $\alpha_p$, and $\bZ/p$, and deal with each case separately.

\begin{lemma}\label{lemma:lefschetz-p-power-perfect-field} Let $k$ be a perfect field of characteristic $p> 0$, let $X \to Y$ be a Hodge $d$-equivalence of syntomic $k$-schemes, and let $G$ be a commutative finite flat $k$-group scheme with a finite filtration $\rm{Fil}^\bullet G$ such that each $\rm{gr}^i G$ is isomorphic to either $\mu_p$, $\alpha_p$, or $\bf{Z}/p$. Then 
\[
\rm{cone}\left(\rm{R}\Gamma_{\rm{fppf}}\left(Y, G\right) \to \rm{R}\Gamma_{\rm{fppf}}\left(X, G\right)\right) \in D^{\geq d}(\bf{Z}).
\]
\end{lemma}
\begin{proof}
    One easily reduces to the case $G=\mu_p$, $G=\alpha_p$, or $G=\bf{Z}/p$. The first case is simply Corollary~\ref{cor:lefschetz-flat-mu_p}. In the second case, one uses the short exact sequence 
    \[
    0\to \alpha_p \to \bf{G}_a \xr{f\mapsto f^p} \bf{G}_a \to 0
    \]
    to reduce the claim to Definition~\ref{defn:hodge-d-equivalence} with $s=0$. In the last case, one uses the Artin--Schreier sequence to reduce to Definition~\ref{defn:hodge-d-equivalence} again.
\end{proof}

Before we extend Lemma~\ref{lemma:lefschetz-p-power-perfect-field} to more general fields and more general group schemes $G$, we need the following preliminary result:

\begin{lemma}\label{lemma:fppf-descent} Let $X$ be a finite type $k$-scheme, let $X_n$ be the base change $X_{\overline{k}^{\otimes^n_k}}$, and let $G$ be a flat finitely presented commutative group $X$-scheme. Then the natural morphism 
\[
\rm{R}\Gamma_\rm{fppf}(X, G) \to \rm{R}\lim_{n\in \Delta}\left(\rm{R}\Gamma_{\rm{fppf}}(X_{n}, G) \right)
\]
is an isomorphism.
\end{lemma}
\begin{proof}
    For each finite extension $k\subset k'$, denote by $X_{n, k'}$ the fiber product $X_{k'^{\otimes^n_k}}$. The natural map
    \[
    \rm{R}\Gamma_\rm{fppf}(X, G) \to \rm{R}\lim_{n\in \Delta}\left(\rm{R}\Gamma_{\rm{fppf}}(X_{n, k'}, G) \right)
    \]
    is an equivalence for any finite $k\subset k'$ because fppf cohomology satisfies fppf descent. Now \cite[Lemma 2.1]{Cesnavicius2015} implies that the natural morphism
    \[
    \hocolim_{k\subset k'\subset \ov{k}} \rm{R}\Gamma_{\rm{fppf}}(X_{n, k'}, G) \to \rm{R}\Gamma_{\rm{fppf}}(X_{n}, G)
    \]
    is an equivalence for any $n\geq 0$. Thus the claim follows from the fact that totalization of coconnective cosimplicial objects commute with filtered (homotopy) colimits. 
\end{proof}

\begin{cor}\label{cor:lefschetz-p-power} Let $k$ be a field of characteristic $p>0$, let $X \to Y$ be a Hodge $d$-equivalence of syntomic $k$-schemes, and let $G$ be a finite commutative $k$-group scheme of $p$-power order. Then 
\[
C\coloneqq \rm{cone}\left(\rm{R}\Gamma_{\rm{fppf}}\left(Y, G\right) \to \rm{R}\Gamma_{\rm{fppf}}\left(X, G\right)\right) \in D^{\geq d}(\Z).
\]
\end{cor}

\begin{proof}
    Let $X_n = X_{\overline{k}^{\otimes^n_k}}$ (and similarly for $Y$). We have a commutative diagram
    \[
    \begin{tikzcd}
        \rm{R}\Gamma_{\rm{fppf}}(Y, G) \arrow{d} \arrow{r} & \rm{R}\lim_{n\in \Delta}\left(\rm{R}\Gamma_{\rm{fppf}}(Y_{n}, G) \right) \arrow{d} \\
        \rm{R}\Gamma_{\rm{fppf}}(X, G) \arrow{r} & \rm{R}\lim_{n\in \Delta}\left(\rm{R}\Gamma_{\rm{fppf}}(X_{n}, G) \right)
    \end{tikzcd}
    \]
    whose horizontal arrows are isomorphisms by Lemma~\ref{lemma:fppf-descent}. Thus, it suffices to show that 
    \[
    \rm{cone}\Big(\rm{R}\Gamma_{\rm{fppf}}(Y_{n}, G) \to \rm{R}\Gamma_{\rm{fppf}}(X_{n}, G)\Big)
    \]
    lies in $D^{\geq d}(\Z)$. For each finite extension $k\subset k'\subset \ov{k}$, we define $X_{n, k', \ov{k}} = X_{k'^{\otimes^{n-1}_k} \otimes_k \ov{k}}$ (and we define $Y_{n, k', \ov{k}}$ similarly).
    By \cite[Lemma 2.1]{Cesnavicius2015}, it suffices to show that 
    \[
    C_{n, k'} \coloneqq \rm{cone}\Big(\rm{R}\Gamma_{\rm{fppf}}(Y_{n, k', \ov{k}}, G) \to \rm{R}\Gamma_{\rm{fppf}}(X_{n, k', \ov{k}}, G)\Big)
    \]
    lies in $D^{\geq d}(\Z)$. \smallskip
    
    Since syntomic morphisms are closed under pullbacks and compositions, we conclude from Example~\ref{ex:finite-field-extension-syntomic} that each $X_{n, k', \ov{k}}$ and $Y_{n, k', \ov{k}}$ is syntomic over $\ov{k}$ for every finite extension $k\subset k' \subset \ov{k}$. Likewise, Example~\ref{ex:finite-field-extension-syntomic} and \cite[Prop.~5.10(1)]{ABM} imply that $X_{n, k', \ov{k}} \to Y_{n, k', \ov{k}}$ is a Hodge $d$-equivalence for every $k'$. Furthermore, the classification of commutative finite flat $\ov{k}$-group schemes implies that $G_{\ov{k}}$ admits a finite filtration such that each associated graded piece is isomorphic to either $\mu_p$, $\alpha_p$, or $\Z/p\Z$. Therefore, Lemma~\ref{lemma:lefschetz-p-power-perfect-field} implies that $C_{n, k'}$ lives in $D^{\geq d}(\Z)$, as desired.
\end{proof}

\begin{cor}\label{cor:lefschetz-p-power-sufficiently-ample} Let $k$ be a field of characteristic $p>0$, let $X \to Y$ be a Hodge $d$-equivalence of syntomic $k$-schemes, and let $G$ be a finite commutative $k$-group scheme of $p$-power order. Then 
\[
C\coloneqq \rm{cone}\left(\rm{R}\Gamma_{\rm{fppf}}\left(Y_S, G\right) \to \rm{R}\Gamma_{\rm{fppf}}\left(X_S, G\right)\right) \in D^{\geq d}(\Z)
\]
for any syntomic $k$-scheme $S$.
\end{cor}
\begin{proof}
    The closed embedding $X_S \to Y_S$ is a Hodge $d$-equivalence by \cite[Prop.~5.10(i)]{ABM}. Moreover, both $X_S$ and $Y_S$ are syntomic over $k$ because syntomic morphisms are closed under pullbacks and compositions. Therefore, Corollary~\ref{cor:lefschetz-p-power} implies the claim. 
\end{proof}

\begin{thm}\label{thm:main-1} Let $k$ be a field, let $Y$ be a projective syntomic $k$-scheme of pure dimension $N\geq d+1$, let $X\hookrightarrow Y$ be a closed syntomic subscheme, and let $G$ be a finite commutative $k$-group scheme. Then the cone 
\[
\rm{cone}\left(\rm{R}\Gamma_\rm{fppf}\left(Y, G\right) \to \rm{R}\Gamma_\rm{fppf}\left(X, G\right)\right)
\]
lies in $D^{\geq d}(\Z)$ if
\begin{enumerate}
    \item $Y = \bf{P}^N_k$ and $X$ is a global complete intersection of dimension $d$, or
    \item $X\hookrightarrow Y$ is a Cartier divisor such that $Y\smallsetminus X$ is affine and $X \hookrightarrow Y$ is a Hodge $d$-equivalence (e.g.,\, $X\subset Y$ is a strongly ample Cartier divisor). 
\end{enumerate} 
\end{thm}
\begin{proof}
    Let $p\geq 0$ be the characteristic of $k$. We consider the short exact sequence
    \[
    0\to G[p^\infty] \to G \to G/G[p^\infty] \to 0.
    \]
    The group $G' \coloneqq G/G[p^\infty]$ is $p$-torsion-free and finite \'etale. Therefore, it suffices to prove the claim separately for a $p$-power torsion $G[p^\infty]$ and for a $p$-torsion-free \'etale $G'$. \smallskip
    
    The case of a $p$-torsion-free \'etale group scheme follows from Theorem~\ref{thm:etale-lefschetz}. So we can assume that $p > 0$ and $G=G[p^\infty]$ is $p$-power torsion. In either case, $X\to Y$ is a Hodge $d$-equivalence (see Theorem~\ref{thm:derived-lefschetz}), so the claim follows from Corollary~\ref{cor:lefschetz-p-power}. 
\end{proof}

The next example shows that Theorem~\ref{thm:main-1} does not hold for an arbitrary ample divisor $X\subset Y$.

\begin{example}\label{exmpl:thm-fails-for-ample}(\cite[\textsection 2]{Biswas-Holla}, \cite[Ex.\,10.1]{Langer}) Let $k$ be a perfect field of characteristic $p>0$, let $Y$ be a smooth, projective, geometrically connected $k$-scheme, and let $X\subset Y$ be an ample Cartier divisor such that
\begin{enumerate}
    \item $\rm{H}^0(X, \O_X)=k$ (e.g., $X$ is reduced and geometrically connected);
    \item $Y$ is of pure dimension $d+1\geq 2$;
    \item $\rm{H}^1(Y,\O_Y(-X)) \neq 0$. (For examples of such pairs with $d+1=2$, see \cite[Prop.~2.14]{Ekedahl}\footnote{Burt Totaro has informed us that the higher dimensional examples of pairs $(D \subset X)$ constructed in (the proof of) \cite[Th.~2]{Mukai} satisfy the assumption of Example~\ref{exmpl:thm-fails-for-ample}. A full justification of this fact will appear elsewhere.}.) 
\end{enumerate}
Then $r\colon \rm{H}^1_{\rm{fppf}}(Y, \alpha_p) \to \rm{H}^1_{\rm{fppf}}(X, \alpha_p)$ is not injective. In particular, 
\[
C\coloneqq \rm{cone}\big(\rm{R}\Gamma_\rm{fppf}(Y, \alpha_{p}) \rightarrow \rm{R}\Gamma_\rm{fppf}(X, \alpha_{p})\big)
\]
does not lie in $D^{\geq d}(\bf{Z}_p)$.
\end{example}
\begin{proof}
Our assumptions on $Y$ imply that $\rm{H}^0(Y, \O_Y)\simeq k$. Therefore, the map 
\[
\rm{H}^1(Y, \O_Y(-X)) \to \rm{H}^1(Y, \O_Y)
\]
is injective. So any non-trivial class in $\rm{H}^1(Y, \O_Y(-X))$ defines a non-trivial class $x\in \rm{H}^1(Y, \O_Y)$ such that $x|_X = 0 \in \rm{H}^1(X, \O_X)$. We claim that $(F^n_Y)^*(x)=0$ for some $n\geq 0$. Indeed, by functoriality, $(F^n_Y)^*(x)$ lies in $\rm{H}^1(Y, \O_Y(-((F_Y^n)^*X)))=\rm{H}^1(Y, \O_Y(-p^nX))$. Since $d\geq 1$, \cite[\href{https://stacks.math.columbia.edu/tag/0FD8}{Tag 0FD8}]{stacks-project} implies that $\rm{H}^1(Y, \O_Y(-p^nX))=0$ for $n\gg 0$. \smallskip

Choose a minimal $n$ such that $(F_Y^n)^*(x)=0$. Then we replace $x$ with $(F_Y^{n-1})^*(x)$ to assume that $F_Y^*(x)=0$ (and $x\neq 0$). Since $F_Y^*$ and $F_X^*$ are bijective on $\rm{H}^0(Y, \O_Y)$ and $\rm{H}^0(X, \O_X)$ respectively, we may use the Artin--Schreier sequence to conclude that 
\[
\rm{H}^1_{\rm{fppf}}(Y, \alpha_{p}) \simeq \ker(F_Y^* \colon \rm{H}^1(Y, \O_Y) \to \rm{H}^1(Y, \O_Y))
\]
and the same for $X$. In particular, $\rm{H}^1_{\rm{fppf}}(Y, \alpha_{p}) \to \rm{H}^1(Y, \O_X)$ is injective (and the same for $X$). Therefore, $x$ defines a non-trivial class in $\rm{ker}(r)$. In particular, $C$ does not lie in $D^{\geq d}(\Z_p)$.
\end{proof}

If $G$ is a lisse sheaf of $\bf{F}_\ell$-modules on $Y_\et$ with $\ell\neq \rm{char} \, k$, then the Lefschetz hyperplane theorem holds for $G$ if $Y$ is smooth. One may wonder if there is an analogous result for flat coefficients. In general, we do not know the correct coefficient theory for flat cohomology in which to pose such a question. In any event, Theorem~\ref{thm:main-1} is false if one does not assume that $G$ comes from a base field, as the following example shows.

\begin{example}\label{example:fails-for-general-finite-group} Let $p$ be a prime number, and let $k=\bf{F}_p$ be a finite field with $p$ elements. Then, for any $N>1$, there is a commutative finite flat rank $p$ group scheme $G$ on $\bf{P}^N_k$ such that 
\begin{enumerate}
    \item Zariski-locally on $\bf{P}^N_k$, the group $G$ is defined over $k$,
    \item for any hyperplane $H\subset \bf{P}^N_k$, the cone 
    \[
        C\coloneqq \rm{cone}\left(\rm{R}\Gamma_\rm{fppf}\left(\bf{P}^N_k, G\right) \to \rm{R}\Gamma_{\rm{fppf}}\left(H, G\right)\right)
    \]
    does not lie in $D^{\geq N-1}(\Z)$.
\end{enumerate}
\end{example}
\begin{proof}
    Let $\bf{G}_a(n)$ be the $\bf{P}^N$-group scheme associated with the line bundle $\O(n)$. Then we define $G=\ker(\rm{Fr}\colon \bf{G}_a(1)\to \bf{G}_a(p))$. Note, Zariski locally on $\bf{P}^N$, the group $G$ is isomorphic to $\alpha_p$ and in particular defined over $k$. \smallskip
    
    Now using that $\rm{H}^i_\rm{fppf}(\bf{P}^N_k, \bf{G}_a(n))=\rm{H}^i(\bf{P}^N_k, \O(n))$, Serre's calculation of cohomology groups of $\O(n)$, and the short exact sequence $0 \to G \to \bf{G}_a(1) \to \bf{G}_a(p) \to 0$, we conclude that 
    \[
    \log_p \left(\#\rm{H}^1_\rm{fppf}(\bf{P}^{N}_k, G)\right)= \binom{N+p}{N} - N -1
    \]
    \[
    \log_p \left(\#\rm{H}^1_\rm{fppf}(H, G)\right) = \log_p \left(\# \rm{H}^1_\rm{fppf}(\bf{P}^{N-1}_k, G)\right) = \binom{N+p-1}{N-1} - N.
    \]
    In particular, the map $\rm{H}^1_\rm{fppf}(\bf{P}^{N}_k, G) \to \rm{H}^1_\rm{fppf}(H, G)$ can not be injective by cardinality reasons. Therefore, $C$ cannot lie in $D^{\geq N-1}(\Z_p)$ for any $N>1$.   
\end{proof}

\subsection{The torsion part of the Picard scheme}

Fix a field $k$. In this section, we use the results of Section~\ref{section:lefschetz-p-part} and Section~\ref{section:lefschetz-non-p-part} to get a Lefschetz hyperplane theorem for the torsion part of the Picard group. We show that, for a complete intersection $X\subset \bf{P}^N_k$ scheme of dimension at least $2$, the torsion part of the Picard group $\rm{Pic}(X)_{\rm{tors}}$ and the torsion component $\bf{Pic}^\tau_{X/k}$ vanish. We also give a version of this result for a general strongly ample divisor. \smallskip

If $\dim X \geq 3$, Grothendieck proved the stronger result $\rm{Pic}(X) \simeq \bf{Z}$ in \cite[Exp.\ XII, Cor.~3.2]{SGA2} which can be also used to deduce that $\bf{Pic}_{X/k} \simeq \ud{\Z}$ as $k$-group schemes. These results are sharp: for instance, the Segre embedding realizes $\bf{P}^1_k\times \bf{P}^1_k$ as a hypersurface in $\bf{P}^3_k$, but $\rm{Pic}(\bf{P}^1_k\times \bf{P}^1_k) \simeq \bf{Z} \oplus \bf{Z}$. However, one can still control the torsion part of Picard group in dimension $2$. If $X$ is a smooth surface and $k$ is algebraically closed, then these results were established in \cite[Exp.\ XI, Th.~1.8]{SGA7_2}. The general case was proven in \cite[Cor.~7.2.3]{Ces-Scholze} by different methods.

\begin{proof}[Proof of Theorem~\ref{thm:intro-main-3}]
For (\ref{thm:SGA7-1}), it suffices to show that $\rm{Pic}(X)[p]=0$ for {\it every} prime number $p$. Since $\rm{Pic}(\bf{P}^N_k)[p]=0$, it suffices to show that the natural morphism $\rm{Pic}(\bf{P}^N_k)[p] \to \rm{Pic}(X)[p]$ is an isomorphism. From the Kummer sequence we obtain the following commutative diagram: 
\begin{equation*}\label{equation:picard-diagram-first}
\begin{tikzcd}
\O(\bf{P}^N_k)^\times = \rm{H}^0_{\rm{fppf}}(\bf{P}^N_k, \bG_m) \arrow[r] \arrow[d, "\alpha"]
    &\rm{H}^1_{\rm{fppf}}(\bf{P}^N_k, \mu_{p}) \arrow[r] \arrow[d, "\beta"]
    &\rm{Pic}(\bf{P}^N_k)[p] \arrow[r] \arrow[d, "\gamma"]
    &0 \\
\O(X)^\times = \rm{H}^0_{\rm{fppf}}(X, \bG_m) \arrow[r]
    &\rm{H}^1_{\rm{fppf}}(X, \mu_{p}) \arrow[r]
    &\rm{Pic}(X)[p] \arrow[r]
    &0.
\end{tikzcd}
\end{equation*}
    We note that the morphism $\O(\bf{P}^N_k) \to \O(X)$ is an isomorphism due to Theorem~\ref{thm:derived-lefschetz} (and Definition~\ref{defn:hodge-d-equivalence} with $s=0$). In particular, $\alpha$ is an isomorphism as well. Also, $\beta$ is an isomorphism by Theorem~\ref{thm:derived-lefschetz}  and Theorem~\ref{thm:main-1}. By the five lemma, $\gamma$ is an isomorphism, and (\ref{thm:SGA7-1}) holds. \smallskip

    Now we show (\ref{thm:SGA7-2}). It suffices to show that the class of $[\O_X(1)] \in \rm{Pic}(X)$ has non-zero image in $\rm{Pic}(X)/p$ for {\it every} prime $p$. Let $c^X_1\colon \rm{Pic}(X)/p \to \rm{H}^2_{\rm{fppf}}(X,\mu_p)$ be the connecting map from the Kummer exact sequence. By definition, $c^X_1$ is injective, so it is enough to show that $c^X_1([\O_X(1)]) \neq 0$ in $\rm{H}^2_{\rm{fppf}}(X, \mu_p)$. The commutative square
    \[
    \begin{tikzcd}
    \rm{Pic}(\bf{P}^N_k)/p \arrow{d}{\rm{res}^\rm{Pic}} \arrow{r}{c_{1}^{\bf{P}^N}} & \rm{H}^2_{\rm{fppf}}(\bf{P}^N_k, \mu_p) \arrow{d}{\rm{res}^{\rm{fppf}}} \\
    \rm{Pic}(X)/p \arrow{r}{c_{1}^X} & \rm{H}^2_{\rm{fppf}}(X, \mu_p).
    \end{tikzcd}
    \]
    shows that we have $c_{1}^X([\O_X(1)])=\rm{res}^{\rm{fppf}}\big(c_{1}^{\bf{P}^n}([\O_{\bf{P}^N}(1)])\big)$. Then we conclude that $c_1^X([\O_X(1)])\neq 0$ because $c_1^{\bf{P}^N}([\O_{\bf{P}^N}(1)])$ is a generator of $\rm{Pic}(\bf{P}^N_k)\simeq \bf{Z}$, and $\rm{res}^{\rm{fppf}}$ is injective by Theorem~\ref{thm:main-1}. \smallskip

    Finally, we show (\ref{thm:SGA7-3}). For this, we can and do assume that $k=\ov{k}$. By \cite[Exp.\ XII, Cor.\ 1.5, Exp.\ XIII, Thm.\ 4.7]{SGA6}, $\bf{Pic}^\tau_{X/k}$ is a finite type $k$-group scheme and an open subfunctor of $\bf{Pic}_{X/k}$, so $\rm{T}_e(\bf{Pic}^\tau_{X/k})=\rm{T}_e(\bf{Pic}_{X/k})\simeq \rm{H}^1(X, \O_X)$. Theorem~\ref{thm:derived-lefschetz} implies that $\rm{H}^1(X, \O_X)=0$ and so $\bf{Pic}^\tau_{X/k}$ is \'etale. Since $k = \ov{k}$, we have $\bf{Pic}^\tau_{X/k}(\overline{k}) = \rm{Pic}(X)_{\rm{tors}} = 0$ by the above, and (\ref{thm:SGA7-3}) follows. 
\end{proof}

Our next aim is to prove an analogue of Theorem~\ref{thm:intro-main-5} for strongly ample divisors (see Theorem~\ref{thm:SGA7-suff-ample}). The derivation of this is slightly more involved than in the previous case, and we must begin with a series of results about algebraic groups.

\begin{lemma}\label{lemma:torsion-in-quotient}
Let $G$ be a commutative group scheme locally of finite type over a field $k$, and let $H$ be a finite type closed $k$-subgroup scheme of $G$. Let $n$ be a positive integer. For each $M \geq 1$, there exists a closed subgroup scheme $G_M$ of $G$ killed by some power of $n$ such that the map
\[
G_M \to G/H
\]
factors through $(G/H)[n^M]$ and such that the factored map is an epimorphism of fppf sheaves.\footnote{A homomorphism of finite type group schemes over a field induces an epimorphism of fppf sheaves if and only if it is faithfully flat by \cite[Exp.\,VI$_\text{B}$, Rmk.\ 9.2.2]{SGA3}, but we do not need this fact.} (Note that $G/H$ exists as a group scheme by \cite[Exp.\ VI\textsubscript{A}, Thm.\ 3.2]{SGA3}.)
\end{lemma}

\begin{proof}
Fixing the positive integer $M$, we may replace $G$ by the schematic preimage of $(G/H)[n^M]$ in $G$ to assume that $G/H$ is $n^M$-torsion. Consider for any $N$ the commutative diagram
\[
\begin{tikzcd}
0 \arrow[r] 
    &H \arrow[r] \arrow[d, "n^N"]
    &G \arrow[r] \arrow[d, "n^N"]
    &G/H \arrow[r] \arrow[d, "n^N"]
    &0 \\
0 \arrow[r]
    &H \arrow[r]
    &G \arrow[r]
    &G/H \arrow[r]
    &0
\end{tikzcd}
\]
with exact rows. By the snake lemma, this gives an exact sequence of group schemes
\[
G[n^N] \to (G/H)[n^N] \to H/n^N H \to G/n^N G.
\]
So it suffices to show that there is some $N \geq M$ such that $H/n^N H \to G/n^N G$ is a monomorphism. Equivalently, one must show that there is some $N \geq M$ such that
\[
H \cap n^N G = n^N H.
\]

To prove the existence of some such $N$, note that the sequence $\{n^N H\}$ of closed $k$-subgroup schemes of $H$ is decreasing, so the fact that $H$ is noetherian (being finite type over $k$) implies that there exists some $N_0$ such that $n^{N_0} H = n^{N_0+1} H$. Since $G/H$ is $n^M$-torsion, we have
\[
H \cap n^{N_0 + M} G \subset n^{N_0} H = n^{N_0 + M} H
\]
by the choice of $N_0$. Thus, taking $N \coloneqq  N_0 + M$ completes the proof.
\end{proof}

\begin{lemma}\label{lemma:torsion-is-dense}
Let $G$ be a commutative group scheme of finite type over an algebraically closed field $k$ of characteristic $p \geq 0$.
\begin{enumerate}
    \item\label{lemma:torsion-is-dense-1} The natural map $G(k)_{\rm{tors}} \to G(k)/G^0(k)$ is surjective.
    \item\label{lemma:torsion-is-dense-2} For some integer $N \geq 1$, the map $G[p^N] \to G/G_{\rm{red}}$ is an epimorphism of fppf sheaves.
    \item\label{lemma:torsion-is-dense-3} If $G$ is smooth and $p > 0$, then $G(k)_{\rm{tors}}$ is schematically dense in $G$.
\end{enumerate}
\end{lemma}

\begin{proof}
For (1), it suffices to show that for every integer $n \geq 1$, the natural map $G(k)[n^\infty] \to (G(k)/G^0(k))[n^\infty]$ is surjective, and this follows from Lemma~\ref{lemma:torsion-in-quotient}. \smallskip

For (2), there is nothing to prove if $p = 0$. If instead $p > 0$, then $G/G_{\rm{red}}$ is a finite $k$-group scheme whose order is a power of $p$. By a theorem of Deligne \cite[Sec.\ 1]{Oort-Tate}, the group scheme $G/G_{\rm{red}}$ is killed by its order, so the result follows again from Lemma~\ref{lemma:torsion-in-quotient}. \smallskip

Finally, we consider (3). By (1), we may and do reduce to the case that $G$ is connected, in which case we will show that if $\ell \neq p$ is any prime number then $G(k)[(\ell p)^\infty]$ is schematically dense in $G$. By a theorem of Chevalley \cite[Th.~1.1]{Conrad-Chevalley}, since $k$ is perfect there is a short exact sequence
\[
0 \to H \to G \to A \to 0,
\]
where $H$ is a linear algebraic group over $k$ and $A$ is an abelian variety over $k$. Moreover, since $H$ is a commutative linear algebraic group over a perfect field, we have $H = T \times U$ for some $k$-torus $T$ and a smooth commutative unipotent $k$-group scheme $U$. It is standard that $T(k)[\ell^\infty]$ and $A(k)[\ell^\infty]$ are schematically dense in $T$ and $A$, respectively, and $U = U[p^M]$ for some $M \geq 1$. (This is where we use that $p > 0$; in characteristic $0$, the group $\bG_a$ has no torsion.) \smallskip

Let $G_0$ denote the schematic closure of $G(k)[(\ell p)^\infty]$ in $G$, so that $G_0$ is a smooth closed $k$-subgroup scheme of $G$. We aim to show that $G_0 = G$. Every connected commutative finite type $k$-group scheme is $\ell$-divisible, so by the snake lemma the natural map $G(k)[\ell^\infty] \to A(k)[\ell^\infty]$ is surjective. By schematic density of $A(k)[\ell^\infty]$ in $A$, the induced map $G_0 \to A$ is dominant, hence surjective by \cite[Exp.\ VI\textsubscript{B}, Prop.\ 1.2]{SGA3}.
It suffices therefore to show that $H$ is contained in $G_0$. But $H(k)[(\ell p)^\infty]$ is schematically dense in $H$, so indeed $H \subset G_0$ and so $G_0 = G$, establishing the result.
\end{proof}

\begin{lemma}\label{lemma:isomorphism-criterion}
Let $f\colon G \to H$ be a homomorphism of commutative group schemes of finite type over a field $k$ of characteristic $p \geq 0$. Suppose that
\begin{enumerate}
    \item $f[\ell^n](\ov{k})\colon G[\ell^n](\ov{k}) \to H[\ell^n](\ov{k})$ is an isomorphism for every prime $\ell$ and every $n \geq 1$,
    \item $\Lie f\colon \Lie G \to \Lie H$ is an isomorphism,
    \item if $p > 0$, then $f[p^n]\colon G[p^n] \to H[p^n]$ is an epimorphism of fppf sheaves for every $n \geq 1$.
\end{enumerate}
Then $f$ is an isomorphism.
\end{lemma}

\begin{proof}
We may and do assume that $k$ is algebraically closed. (2) shows that $\ker f$ is finite etale, and so (1) implies that $\ker f = 0$. Thus, $f$ is a closed embedding by \cite[Exp.\ VI\textsubscript{B}, Cor.\ 1.4.2]{SGA3}. Moreover, Lemma~\ref{lemma:torsion-is-dense}(\ref{lemma:torsion-is-dense-1}) shows that the image of $f$ intersects each connected component of $H$ nontrivially. If $p = 0$ then $G$ and $H$ are smooth, so $f$ is an isomorphism. Thus from now on we assume $p > 0$. \smallskip

Let $\overline{G} = G/G_{\rm{red}}$ (which is a scheme by \cite[Exp.\ VI\textsubscript{A}, Thm.\ 3.2]{SGA3}) and consider the commutative diagram
\[
\begin{tikzcd}
0 \arrow[r]
    &G_{\rm{red}} \arrow[r] \arrow[d, "f_{\rm{red}}"]
    &G \arrow[r] \arrow[d, "f"]
    &\overline{G} \arrow[r] \arrow[d, "\overline{f}"]
    &0 \\
0 \arrow[r]
    &H_{\rm{red}} \arrow[r]
    &H \arrow[r]
    &\overline{H} \arrow[r]
    &0
\end{tikzcd}
\]
with exact rows. Since $p> 0$ and $f$ is surjective on torsion,  Lemma~\ref{lemma:torsion-is-dense}(\ref{lemma:torsion-is-dense-3}) shows that $f$ is dominant, and thus it is surjective by \cite[Exp.\ VI\textsubscript{B}, Prop.\ 1.2]{SGA3}. Now $G_{\rm{red}}$ and $H_{\rm{red}}$ are both smooth over $k$, so because $f$ is a surjective closed embedding it follows that $f_{\rm{red}}$ is an isomorphism. Thus to show that $f$ is an isomorphism, it suffices to show that $\overline{f}$ is an isomorphism. \smallskip

Now by Lemma~\ref{lemma:torsion-is-dense}(\ref{lemma:torsion-is-dense-2}), there is some integer $N \geq 1$ such that the natural maps $G[p^N] \to \overline{G}$ and $H[p^N] \to \overline{H}$ are faithfully flat. Thus we find a commutative diagram
\[
\begin{tikzcd}
0 \arrow[r]
    &G_{\rm{red}}[p^N] \arrow[r] \arrow[d, "f_{\rm{red}}{[p^N]}"]
    &G[p^N] \arrow[r] \arrow[d, "f{[p^N]}"]
    &\overline{G} \arrow[r] \arrow[d, "\overline{f}"]
    &0 \\
0 \arrow[r]
    &H_{\rm{red}}[p^N] \arrow[r]
    &H[p^N] \arrow[r]
    &\overline{H} \arrow[r]
    &0
\end{tikzcd}
\]
with exact rows. By (3) and the fact that $\ker f = 0$, the map $f[p^N]$ is an isomorphism. The previous paragraph shows that $f_{\rm{red}}[p^N]$ is an isomorphism, so also $\overline{f}$ is an isomorphism. Since both $f_{\rm{red}}$ and $\overline{f}$ are isomorphisms, we see that $f$ is an isomorphism, as desired.
\end{proof}

With these preliminaries in hand, we can finally prove the following theorem.

\begin{thm}\label{thm:SGA7-suff-ample}
Let $Y$ be a projective syntomic $k$-scheme of pure dimension $\geq 3$, and let $X \subset Y$ be a Cartier divisor such that $Y\smallsetminus X$ is affine and $X \hookrightarrow Y$ is a Hodge $2$-equivalence. The natural map $f\colon\Pic^\tau_{Y/k} \to \Pic^\tau_{X/k}$ is an isomorphism.
\end{thm}

\begin{proof}
We may and do assume that $k$ is algebraically closed of characteristic $p \geq 0$ by \cite[Prop.\ 5.10]{ABM}. We need only verify the hypotheses of Lemma~\ref{lemma:isomorphism-criterion} applied to $G = \Pic^\tau_{Y/k}$ and $H = \Pic^\tau_{X/k}$ (which are finite type $k$-group schemes by \cite[Exp.\ XII, Cor.\ 1.5;  Exp.\ XIII, Thm.\ 4.7(iii)]{SGA6}). Condition (2) follows from the fact that the natural map $\rm{H}^1(Y, \O) \to \rm{H}^1(X, \O)$ is an isomorphism by Definition~\ref{defn:sufficiently-ample}.\smallskip

Let $\ell$ is a prime number and let $S$ be a syntomic $k$-scheme. We have a commutative diagram 
\begin{equation}\label{equation:picard-diagram}
\begin{tikzcd}
\rm{H}^0(Y_S, \bG_m) \arrow[r] \arrow[d]
    &\rm{H}^1(Y_S, \mu_{\ell^n}) \arrow[r] \arrow[d]
    &\rm{Pic}(Y_S)[\ell^n] \arrow[r] \arrow[d]
    &0 \\
\rm{H}^0(X_S, \bG_m) \arrow[r]
    &\rm{H}^1(X_S, \mu_{\ell^n}) \arrow[r]
    &\rm{Pic}(X_S)[\ell^n] \arrow[r]
    &0
\end{tikzcd}
\end{equation}
with exact rows. The leftmost vertical arrow of (\ref{equation:picard-diagram}) is an isomorphism by Definition~\ref{defn:hodge-d-equivalence} and \cite[Prop.\ 5.10]{ABM}, since $\rm{H}^0(Y_S, \bG_m) = \rm{H}^0(Y_S, \O)^*$. Moreover, the map
\[
\rm{H}^1(Y_S, \mu_{\ell^n}) \to \rm{H}^1(X_S, \mu_{\ell^n})
\]
is an isomorphism by Theorem~\ref{thm:main-1}. Thus $\rm{Pic}(Y_S)[\ell^n] \to \rm{Pic}(X_S)[\ell^n]$ is an isomorphism by the five lemma. Since $X$ has a rational point ($k$ being algebraically closed), we have $\Pic_{Y/k}(S) = \rm{Pic}(Y_S)/\rm{Pic}(S)$, and similarly for $X$. Letting $S = \Spec k$, we see that $\Pic^\tau_{Y/k}[\ell^n](k) \to \Pic^\tau_{X/k}[\ell^n](k)$ is an isomorphism and we have verified (1).\smallskip

Now set $S = P_{X,n} \coloneqq \Pic_{X/k}[p^n]$, which is a syntomic $k$-scheme by Lemma~\ref{lemma:finite-flat-groups-syntomic}. Define $P_{Y,n}$ similarly. Since $\rm{Pic}(P_{X,n}) = 0$ (as $P_{X, n}$ is an extension of a finite $k$-group scheme by a unipotent group scheme), the argument above shows $P_{X,n}(P_{X,n}) = \rm{Pic}(X_{P_{X,n}})[p^n]$, and similarly for $P_{Y,n}(P_{X,n})$. Thus the previous paragraph shows that the natural map $P_{Y,n}(P_{X,n}) \to P_{X,n}(P_{X,n})$ is an isomorphism, and so there is a morphism $g\colon P_{X,n} \to P_{Y,n}$ such that $f \circ g = \rm{id}_{P_{X,n}}$. Therefore $f$ is an epimorphism of fppf sheaves, so Lemma~\ref{lemma:isomorphism-criterion} shows that $\Pic^\tau_{Y/k} \to \Pic^\tau_{X/k}$ is an isomorphism.
\end{proof}

\begin{cor}\label{cor:Pic-scheme-iso}
Let $Y$ be a projective syntomic $k$-scheme of pure dimension $\geq 4$, and let $X \subset Y$ be a Cartier divisor such that $Y\smallsetminus X$ is affine, $X \hookrightarrow Y$ is a Hodge $2$-equivalence, and $\rm{H}^i(Y, \O_Y(-nX)) = 0$ for all $n > 0$ and $i = 1, 2$. The map $\Pic_{Y/k} \to \Pic_{X/k}$ is an isomorphism.
\end{cor}

\begin{proof}
We may and do assume $k = \ov{k}$. In view of Theorem~\ref{thm:SGA7-suff-ample}, we see that $\Pic^\tau_{Y/k} \to \Pic^\tau_{X/k}$ is an isomorphism. Considering the commutative diagram
\[
\begin{tikzcd}
0 \arrow[r]
    &\Pic^\tau_{Y/k} \arrow[r] \arrow[d]
    &\Pic_{Y/k} \arrow[r] \arrow[d]
    &\rm{Num}(Y) \arrow[r] \arrow[d]
    &0 \\
0 \arrow[r]
    &\Pic^\tau_{X/k} \arrow[r]
    &\Pic_{X/k} \arrow[r]
    &\rm{Num}(X) \arrow[r]
    &0
\end{tikzcd}
\]
we see that it suffices to show that the natural map $\rm{Num}(Y) \to \rm{Num}(X)$ is an isomorphism, and for this it suffices to show that $\rm{Pic}(Y) \to \rm{Pic}(X)$ is an isomorphism, a consequence of \cite[Exp.\ XII, Cor.\ 3.6]{SGA2} (which applies because $Y$ is syntomic and $\rm{H}^i(Y, \O_Y(-nX))=0$ for $n>0$ and $i=1,2$).
\end{proof}

We note that Corollary~\ref{cor:example-nef-normal-bundle} and Theorem~\ref{thm:derived-lefschetz} (together with \cite[Cor.~2.8]{Deligne-Illusie}) produce many interesting examples when the assumptions of the two above theorems are satisfied. \smallskip

The following example provides some evidence that Theorem~\ref{thm:SGA7-suff-ample} and Corollary~\ref{cor:Pic-scheme-iso} are probably false for a general ample divisor:

\begin{example}\label{rmk:no-picard-lefschetz-ample} Let $X\subset Y$ be a pair as in Example~\ref{exmpl:thm-fails-for-ample} with $d+1\geq 3$ (resp. $d+1\geq 4$). Then 
\begin{enumerate}
    \item\label{rmk:no-picard-lefschetz-ample-1} the map $\bf{Pic}^\tau_{Y/k} \to \bf{Pic}^\tau_{X/k}$ (resp. $\bf{Pic}_{Y/k} \to \bf{Pic}_{X/k}$) is not an isomorphism;
    \item\label{rmk:no-picard-lefschetz-ample-2} if $X$ is also smooth and the natural morphism $\rm{H}^0(Y,\Omega^1_Y)\to \rm{H}^0(X, \Omega^1_X)$ is an isomorphism, then $\rm{Pic}(Y)[p] \to \rm{Pic}(X)[p]$ is not an isomorphism.\footnote{We do not know if such examples exist.}
\end{enumerate}
\end{example}
\begin{proof}
    First, we note that the proof of Example~\ref{exmpl:thm-fails-for-ample} implies that the natural morphism $\rm{H}^1(Y, \O_Y) \to \rm{H}^1(X, \O_X)$ is not an isomorphism. Thus, the morphism $\bf{Pic}^\tau_{Y/k} \to \bf{Pic}^\tau_{X/k}$ (resp. $\bf{Pic}_{Y/k} \to \bf{Pic}_{X/k}$) does not induce an isomorphism of Lie algebras. \smallskip

    For (\ref{rmk:no-picard-lefschetz-ample-2}), we note that the Kummer exact sequence implies that $\rm{H}^1(Y, \mu_p) = \rm{Pic}(Y)[p]$ and the same for $X$. Therefore, it suffices to show the analogous claim for $\rm{H}^1(-, \mu_p)$. Let $Y'$ be the Frobenius twist of $Y$, and let $\Omega^1_{Y'/k, \rm{cl}}$ denote the sheaf of closed $1$-forms on $Y'$ (and similarly for $X$ and $X'$). Now we note that \cite[Prop.~2.4]{Artin-Milne} implies that there are exact sequences
    \begin{equation}\label{eqn:mu_p}
    0 \to \rm{H}^1(Y, \mu_p) \to \rm{H}^0(Y, \Omega^1_{Y'/k, \rm{cl}}) \xrightarrow{\psi_1} \rm{H}^0(Y, \Omega^1_{Y/k}),
    \end{equation}
    \begin{equation}\label{eqn:alpha_p}
    0 \to \rm{H}^1(Y, \alpha_p) \to \rm{H}^0(Y, \Omega^1_{Y'/k, \rm{cl}}) \xrightarrow{\psi_2} \rm{H}^0(Y, \Omega^1_{Y/k}),
    \end{equation}
    and similarly for $X$ (we note that $\psi_1$ and $\psi_2$ are {\it different} maps). Now (the proof of) Example~\ref{exmpl:thm-fails-for-ample} ensures that the natural morphism $\rm{H}^1(Y, \alpha_p) \to \rm{H}^1(X, \alpha_p)$ is not injective. Thus (\ref{eqn:alpha_p}) formally implies that the natural morphism $\rm{H}^0(Y, \Omega^1_{Y'/k, \rm{cl}}) \to \rm{H}^0(X, \Omega^1_{X'/k, \rm{cl}})$ is not injective. Now (\ref{eqn:mu_p}) and our assumption imply that $\rm{H}^1(Y, \mu_p) \to \rm{H}^1(X, \mu_p)$ is not injective either. This finishes the proof. 
\end{proof}

\bibliography{biblio}

\end{document}